\newtheorem{thm}{Theorem}[section]
\newtheorem{cor}[thm]{Corollary}
\newtheorem{lem}[thm]{Lemma}
\newtheorem{prop}[thm]{Proposition}
\theoremstyle{definition}
\newtheorem{dfn}[thm]{Definition}
\newtheorem{rmk}[thm]{Remark}
\newtheorem{exm}[thm]{Example}
\newcommand{\ilim}[1][]{\mathop{\varinjlim}\limits_{#1}}
\newcommand{\tf}{{\mathcal{T}}}
\newcommand{\pl}{{\underset{W\in\mathcal{W}}{\oplus}\mathscr{F}(W)}}
\newcommand{\hmx}{{{\rm H}^m_M(X,\mathscr{F})\underset{\mathbb{Z}_M(M)}{\otimes}or_{M/X}(M)}}
\newcommand{\rcsp}{\mathbb D_{\mathbb{C}^n}}
\newcommand{\hexpo}{{\mathscr{O}}^{\operatorname{exp}}_{\rcsp}}
\newcommand{\bexpo}{{\mathscr{B}}^{\operatorname{exp}}_{\mathbb{D}_{\mathbb R}^n}}
\begin{document}

\title{Intuitive representation of local cohomology groups}

\date{}

\author{Daichi Komori and Kohei Umeta}


\maketitle

\begin{abstract}
\qquad We construct a framework which gives intuitive representation of local cohomology groups.
By defining the concrete mappings among them, we show their equivalence.
As an application, we justify intuitive representation of Laplace hyperfunctions.
\end{abstract}

\section{Introduction}

\qquad The theory of hyperfunctions was established by M.~Sato in 1950s \cite{S},\cite{SKK}.
He defined hyperfunctions by applying the local cohomology functor to the sheaf of holomorphic functions and it was a starting point of algebraic analysis and microlocal analysis.

The definition of a hyperfunction depends on several general theories such as the sheaf theory, its cohomology theory and so on,  which are not necessarily common among people who study analysis.
To understand a hyperfunction without these heavy theories, A.~Kaneko \cite{Kane} and M.~Morimoto \cite{Mori} defined hyperfunctions in an intuitive way.
Their idea is as follows;
Let $M$ be an open subset in $\mathbb{R}^n$ and $X$ its complex neighborhood.
For the sheaf $\mathscr{O}$ of holomorphic functions, they realize a hyperfunction $h\in\mathscr{B}_M(M)$ as a formal sum of holomorphic functions $f_W\in\mathscr{O}(W)$, where $W$ is an infinitisimal wedge of type $M\times\sqrt{-1}\Gamma$ with an $\mathbb{R}_+$-conic open subset $\Gamma$, that is,
\begin{equation}
\displaystyle h(x)=\sum_{W\in\mathcal{W}} f_W(x+\sqrt{-1}y),
\end{equation}
where $\mathcal{W}$ is a family of infinitisimal wedges with edge $M$.
Their definition allows us to manipulate a hyperfunction like a function as it is represented by holomorphic functions on wedges.

Their idea can be applied to not only the theory of hyperfunctions but also local cohomology groups $\hmx$,
where $X$ is a topological manifold, $M$ is a closed submanifold of $X$ of codimension $m$ and $\mathscr{F}$ is a sheaf on $X$.
The aim of this paper is to construct intuitive representation $\hat{\rm H}(\mathscr{F}(\mathcal{W}))$
of local cohomology groups,
and show the equivalence of them.
To justify their intuitive representation,
we introduce a framework which consists of a triplet $(\mathcal{T},\mathcal{U},\mathcal{W})$ satisfying several conditions.
Here $\mathcal{T}$ is a family of stratifications of $S^{m-1}$,
$\mathcal{U}$ is a family of open neighborhoods of $M$ and
$\mathcal{W}$ is a family of open sets in $X$.
In this framework we define intuitive representation $\hat{\rm H}(\mathscr{F}(\mathcal{W}))$ of local cohomology groups by
\begin{equation}
\hat{\rm H}(\mathscr{F}(\mathcal{W}))=\left(\pl\right)/\mathcal{R},
\end{equation}
where $\mathcal{R}$ is a $\mathbb{C}$-vector space so that two sections which coincide on their non-empty common domain give the same element in $\hat{\rm H}(\mathscr{F}(\mathcal{W}))$.

We shall show the equivalence of local cohomology groups and their intuitive representation by constructing the isomorphisms concretely.
In this course, we can construct the boundary value morphism $b_\mathcal{W}$ from their intuitive representation to local cohomology groups in a functorial way due to Schapira's idea \cite{KS}.
The inverse map $\rho$ of the boundary value morphism is, however, not obvious.
Therefore by introducing an particular resolution which depends on the choices of a stratification $\chi\in\mathcal{T}$, we obtain the expression of local cohomology groups depending on $\chi$.
Then we realize the inverse map $\rho$ thanks to this expression.

As an application, for example, we can construct a framework which realizes intuitive representation of Laplace hyperfunctions.
The theory of Laplace hyperfunctions in one variable was established by H.~Komatsu as a framework of operational calculus, and is extended to the one in several variables by K.~Umeta and N.~Honda \cite{Honda}.
In particular, they established a vanishing theorem of cohomology groups on a pseudoconvex open subset for holomorphic functions with exponential growth at infinity.
Their result is the extension of Oka-Cartan's vanishing theorem to $\mathbb{D}_{\mathbb{C}^n}$, and is a crucial key for constructing a framework in the application.
Here $\mathbb{D}_{\mathbb{C}^n}$ is the radial compactification of $\mathbb{C}^n$.
In addition, we establish a theorem of Grauert type which guarantees the existence of a good open neighborhood in our sense,
from which we can conclude the existence of the framework for Laplace hyperfunctions.

\

At the end of the introduction, we would like to show our greatest appreciation to Professor Naofumi Honda for the valuable advice and generous support in Hokkaido University.

\section{Intuitive Representation}
In this section, we introduce several definitions which are needed for constructing a framework.
Then we define intuitive representation of local cohomology groups within this framework.

\subsection{Preparation}

Let $X$ be a topological manifold and $M$ a closed submanifold of $X$.
Note that these manifolds are allowed to have their boundaries.
Let $\mathscr{F}$ be a sheaf on $X$, $\mathbb{Z}_X$ the constant sheaf on $X$ having stalk $\mathbb{Z}$ and $m$ a non negative integer.
We denote by ${\rm H}^m_M(X,\mathscr{F})$ the $m$-th local cohomology group of $X$ supported by $M$.
We also denote by $D^m$ the open unit ball in $\mathbb{R}^m$ with the center at the origin and set $S^{m-1}=\partial{D^m}$.
Then we define $\widetilde{X}$ as follows.
\begin{eqnarray}{\label{xm}}
\widetilde{X}=M\times D^m.
\end{eqnarray}
We assume that there exists a homeomorphism $\iota: X\rightarrow \widetilde{X}$ which satisfies $\iota(M)=M\times\{0\}$.
Hereafter we identify $X$ with $\widetilde{X}$ by $\iota$.

As the first step of constructing framework, we introduce an appropriate partition of $S^{m-1}$.
\begin{dfn}
Let $k=0,1,\cdots,m-1$.
A simplex $\sigma$ is said to be a linear $k$-cell in $S^{m-1}$ if $\sigma$ is oriented and it is written in the form
\begin{equation}
\sigma=\varphi(\cap H_i)\cap S^{m-1}.
\end{equation}
Here $\varphi : \mathbb{R}^{k+1} \to \mathbb{R}^m$ is a linear injective map and $\{H_i\}$ is a finite family of open half spaces in $\mathbb{R}^{k+1}$ with $0\in \partial H_i$ whose intersection is non-empty.
For convenience, we regard a empty set $\emptyset$ as a linear cell.
\end{dfn}

Let us recall the definition of a stratification of a set $U$.
A partition $U=\underset{\lambda\in\Lambda}{\sqcup}U_\lambda$ is called a stratification of $U$, if it is locally finite, and it satisfies for all the pairs $(U_\lambda,U_\tau)$
\begin{center}
$\overline{U_\lambda}\cap U_\tau\neq \emptyset\Rightarrow U_\tau\subset\overline{U_\lambda}$,
\end{center}
and we call each $U_\lambda$ a stratum.
Let $|\sigma|$ denote the realization,
i.e., just the set in $S^{m-1}$ forgetting its orientation, of the simplex $\sigma$.

Let $\chi$ be a stratification of $S^{m-1}$ such that each stratum is a linear cell of $S^{m-1}$.
We denote by $\Delta(\chi)$ and $\Delta_k(\chi)$ the set of all linear cells of $\chi$ and the set of all linear $k$-cells of $\chi$, respectively.
We also denote by $|\Delta|(\chi)$ the set of realizations $|\sigma|$ of cells $\sigma \in \Delta(\chi)$, that is,
$$
|\Delta|(\chi) = \{|\sigma|\,;\, \sigma \in \Delta(\chi)\}.
$$
The set $|\Delta|_k(\chi)$ is also defined in the same way.
\begin{dfn}

For two stratifications $\chi$ and $\chi'$ of $S^{m-1}$, $\chi'$ is finer than $\chi$ if and only if for any $\sigma'\in\Delta(\chi')$ there exists $\sigma\in\Delta(\chi)$ with $\sigma'\subset\sigma$, which is denoted by $\chi\prec\chi'$.
\end{dfn}

\begin{dfn}
 For a cell $\sigma\in\Delta(\chi)$ we define a star open set ${\rm St}_\chi(|\sigma|)$ as follows.
\begin{equation}
{\rm St}_\chi(|\sigma|)=\underset{\substack{\sigma\subset\overline{\tau} \\ \tau\in\Delta(\chi)}}{\sqcup}|\tau|.
\end{equation}
\end{dfn}
As a special case, we set ${\rm St}_\chi(|\emptyset|)=S^{m-1}$.
We write ${\rm St}(|\sigma|)$ instead of ${\rm St}_\chi(|\sigma|)$ if there is no confusion.

\begin{dfn}
Let $\chi$ be a stratification of $S^{m-1}$ whose stratum is a linear cell
and let $\sigma,\tau\in\Delta(\chi)$.
We write $\sigma\wedge\tau=\delta$ if and only if there exists a linear cell $\delta$ such that $\overline{\sigma}\cap\overline{\tau}=\overline{\delta}$ holds.
\end{dfn}

\begin{rmk}
$\delta$ is allowed to be $\emptyset$.
\end{rmk}

\begin{dfn}
For a subset $K$ in $S^{m-1}$ we define the subset $M*K$ in $\widetilde{X}$ by 
\begin{equation}
M*K=\{(x,ty)\in M\times D^m\,;\, x\in M, y\in K, 0<t<1\}.
\end{equation}
\end{dfn}
\subsection{Intuitive representation}
Let us define a framework ($\mathcal{T},\mathcal{U},\mathcal{W}$) in which intuitive representation of local cohomology groups can be realized.
Let $\mathscr{F}$ be a sheaf of $\mathbb{C}$-vector spaces on $X$ and $\mathcal{T}$ a family of stratifications of $S^{m-1}$.
Firstly we assume the following conditions below for $\mathcal{T}$.

\begin{enumerate}
\renewcommand{\labelenumi}{(T-\arabic{enumi})}
\item Each $\chi\in\mathcal{T}$ is a stratification of $S^{m-1}$ associated with partitioning by a finite family of hyperplanes in $\mathbb{R}^m$ passing through the origin.
\item For any $\chi'$, $\chi''\in\tf$, there exists $\chi\in\tf$ such that $\chi'\prec\chi$ and $\chi''\prec\chi$.
\end{enumerate}
Secondly let $\mathcal{U}$ be a family of open neighborhoods of $M$ satisfying the following conditions.
\begin{enumerate}
\renewcommand{\labelenumi}{(U-\arabic{enumi})}
\item For any $\chi\in\mathcal{T}$, $\sigma\in\Delta(\chi)$ and for any $U\in\mathcal{U}$, we have
\begin{equation}
{\rm H}^k(M*{\rm St}_\chi(|\sigma|) \cap U,\mathscr{F})=0\quad(k\neq 0).
\end{equation}
\item For any $U_1\in\mathcal{U}$ and $U_2\in\mathcal{U}$, there exists $U\in\mathcal{U}$ with $U\subset U_1$ and $U\subset U_2$.
\end{enumerate}
Finally we denote by $\mathcal{W}$ a family of open sets in $X$ and assume the following conditions.
\begin{enumerate}
\renewcommand{\labelenumi}{(W-\arabic{enumi})}
\item For any $\chi\in\tf$, $\sigma\in\Delta(\chi)$ and $U\in\mathcal{U}$, we have $M*{{\rm St}_\chi(|\sigma|)}\cap U\in\mathcal{W}$.

\item (Existence of a finer asyclic stratification.) For any $W\in\mathcal{W}$, there exists $\chi\in\tf$, $\sigma\in\Delta(\chi)$ and $U\in\mathcal{U}$ such that $M*{{\rm St}_\chi(|\sigma|)}\cap U\subset W$.

\item (Cone connectivity of $W$.) Let $W\in\mathcal{W}$, $U\in\mathcal{U}$, $\sigma_1\in\Delta_{m-1}(\chi_1)$ and $\sigma_2\in\Delta_{m-1}(\chi_2)$ for some $\chi_1,\,\chi_2\in\tf$ satisfying $M*{\rm St}_{\chi_k}(|\sigma_k|) \cap U \subset W$  $(k=1,2)$.
Then there exist $\chi$ which is finer than $\chi_1$ and $\chi_2$, $(m-1)$-cells $\tau_1,\tau_2,\dots,\tau_\ell\in\Delta_{m-1}(\chi)$ and an open neighborhood $U'\in\mathcal{U}$ which satisfy the following conditions.
\begin{enumerate}
	\item $\tau_1\subset\sigma_1$ and $\tau_\ell\subset \sigma_2$,
	\item $M*{\rm St}(|\tau_k\wedge\tau_{k+1}|)\cap U'\subset W$ for $k=1,2,\dots,\ell-1$.
\end{enumerate}
\end{enumerate}
We call the above conditions the condition (T), (U) and (W), respectively.
We give some examples of $\mathcal{T}$, $\mathcal{U}$ and $\mathcal{W}$ satisfying the above conditions.
In the following examples, let $M$ be an open set in $\mathbb{R}^n$ and $X=M\times\sqrt{-1}\mathbb{R}^n$.
\begin{dfn}
Let $W$ be an open set in $\mathbb{C}^n$ and $\Gamma$ an $\mathbb{R}_+$-conic open subset in $\mathbb{R}^n$.
We say that $W$ is an infinitisimal wedge of type $M\times\sqrt{-1}\Gamma$ if and only if the following conditions hold.
\begin{enumerate}
\item $W \subset M \times \sqrt{-1}\Gamma$.
\item For any $\mathbb{R}_+$-conic open proper subset $\Gamma'$ of $\Gamma$, there exists an open neighborhood $U \subset X$ of $M$ such that
\begin{equation}
M \times \sqrt{-1}\Gamma' \cap U \subset W.
\end{equation}
\end{enumerate}
\end{dfn}
\begin{exm}
Let $\mathscr{F}$ be the sheaf $\mathscr{O}_X$ of holomorphic functions on $X$.
We set $\mathcal{T}$, $\mathcal{U}$ and $\mathcal{W}$ as follows.
\begin{flalign*}
\mathcal{T}&=\{\chi\,;\,\mbox{$\chi$ is associated with partitioning by the finite family of} \\
&\mbox{\qquad\qquad\qquad\qquad\qquad hyperplanes in $\mathbb{R}^n$ passing through the origin.}\}, \\
\mathcal{U}&=\{\mbox{all the Stein open neighborhoods of $M$ in $X$} \}, \\
\mathcal{W}&=\{\mbox{all the infinitisimal wedges of type $M\times\sqrt{-1}\Gamma$}\,; \\
&\mbox{\qquad\qquad\qquad\qquad$\Gamma$ runs through all the connected open cones.}\}.
\end{flalign*}
\end{exm}
\begin{exm}
Let $\mathscr{F}=\mathscr{O}_X$.
We set $\mathcal{T}=\{\chi\}$, where $\chi$ is the stratification which is associated by the set of hyperplanes $H_i=\{y=(y_1,y_2,\cdots,y_n)\in\mathbb{R}^n\,;\,y_i=0\}\quad(i=1,2,\cdots,n)$, and set $\mathcal{U}=\{X\}$.
Moreover we define $H^+_i$ and $H^-_i$ by
\begin{equation*}
H^\pm_i=\{y=(y_1,y_2,\cdots,y_n)\,;\,\pm y_i>0\},
\end{equation*}
respectively.
Then we set
\begin{equation}
\mathcal{W}=\{W\,;\,\mbox{Each $W$ is either $X$ itself or the finite intersection of $H^*_i$ with $*=+$ or $-$.}\}.
\end{equation}
\end{exm}

Now we are ready to define intuitive representation of local cohomology groups.
\begin{dfn}
We define intuitive representation $\hat{\rm H}(\mathscr{F}(\mathcal{W}))$ of local cohomology groups as follows.
\begin{eqnarray}
\hat{\rm H}(\mathscr{F}(\mathcal{W}))=\left(\pl\right)/\mathcal{R}.
\end{eqnarray}i
Here $\mathcal{R}$ is the $\mathbb{C}$-vector space generated by the following elements.
\begin{eqnarray}
f{\oplus}(-f|_{W_2})
\quad(f\in\mathscr{F}(W_1),\,W_1,\,W_2\in\mathcal{W}\ {\it with}\ W_2\subset W_1).
\end{eqnarray}
\end{dfn}

\begin{rmk}
We can consider a restriction map of intuitive representation.
Let $X$ be a topological manifold, $M$ a closed submanifold of $X$ and $\mathscr{F}$ a sheaf on $X$.
We assume that there exists $(\mathcal{T},\mathcal{U},\mathcal{W})$ satisfying the conditions (T), (U) and (W).
Moreover let $M'$ be an open subset in $M$ and a homeomorphism $\iota':X'\overset{\sim}{\longrightarrow}M'\times D^m$ with the following commutative diagram.
\begin{eqnarray}
\begin{CD}
X' @>\sim>\iota'> M'\times D^m \\
@VVV @VVV \\
X @>\sim>\iota> M\times D^m.
\end{CD}
\end{eqnarray}
We set 
\begin{flalign}
\mathcal{T}'&=\mathcal{T}, \\
\mathcal{U}'&=\mathcal{U}\cap X'=\{U\cap X'\,;\,U\in\mathcal{U}\}, \\
\mathcal{W}'&=\mathcal{W}\cap X'=\{W\cap X'\,;\,W\in\mathcal{W}\}.
\end{flalign}
Then $(\mathcal{T}',\mathcal{U}',\mathcal{W}')$ satisfy the conditions (T) and (W).
Hence by assuming the condition (U-1) for $\mathcal{T}'$ and $\mathcal{U}'$,
we can obtain intuitive representation of 
$$
{{\rm H}^m_{M'}(X',\mathscr{F})\underset{\mathbb{Z}_{M'}(M')}{\otimes}or_{M'/X'}(M')}
$$
by using $(\mathcal{T}',\mathcal{U}',\mathcal{W}')$.
Furthermore the restriction map of intuitive representation is induced from the one $\mathscr{F}(W)\longrightarrow\mathscr{F}(W\cap X')$ of the sheaf $\mathscr{F}$.
\end{rmk}


\section{The equivalence of local cohomology groups and their intuitive representation}

Through this section, we shall prove the following main theorem.
This theorem guarantees the equivalence of local cohomology groups and their intuitive representation.

\begin{thm}{\label{mainthm}}
The boundary value morphism $b_{\mathcal{W}}$ defined below gives an isomorphism from intuitive representation to local cohomology groups.
\begin{equation}
b_\mathcal{W}:\hat{\rm H}(\mathscr{F}(\mathcal{W}))\overset{\sim}{\longrightarrow}\hmx.
\end{equation}
\end{thm}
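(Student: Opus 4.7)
I begin with the construction of $b_\mathcal{W}$, which is the functorial direction. Given $f\in\mathscr{F}(W)$ with $W\in\mathcal{W}$, I use (W-2) to pick $\chi\in\mathcal{T}$, a cell $\sigma$ and $U\in\mathcal{U}$ with $M*{\rm St}_\chi(|\sigma|)\cap U\subset W$, and by refining I may assume $\sigma\in\Delta_{m-1}(\chi)$. The restriction of $f$ to $M*|\sigma|\cap U$ lies on an $\mathscr{F}$-acyclic set by (U-1), and the Schapira-type boundary value morphism of \cite{KS} attaches to it a class in $\hmx$, the orientation of $\sigma$ providing the orientation tensor factor. Conditions (T-2) and (U-2) guarantee independence of the auxiliary choices, and the defining relations of $\mathcal{R}$ are sent to zero by functoriality; hence $b_\mathcal{W}$ descends to $\hat{\rm H}(\mathscr{F}(\mathcal{W}))$.

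For the inverse $\rho$ I first build a concrete \v{C}ech model of the target depending on $\chi$. For each $\chi\in\mathcal{T}$ and $U\in\mathcal{U}$, the family $\{M*{\rm St}_\chi(|\sigma|)\cap U\}_{\sigma\in\Delta(\chi)}$ is an $\mathscr{F}$-acyclic open cover by (U-1), so the associated \v{C}ech complex computes $\mathrm{R}\Gamma(U\setminus M,\mathscr{F})$; combining this with the distinguished triangle for $\mathrm{R}\Gamma_M$ and using the orientations of the top cells in $\Delta_{m-1}(\chi)$, I present ${\rm H}^m_M(U,\mathscr{F})\otimes or_{M/X}(M)$ as the cokernel of the top-degree \v{C}ech differential. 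Passing to the colimit over $\chi\in\mathcal{T}$ and $U\in\mathcal{U}$, which are directed by (T-2) and (U-2), yields a presentation of $\hmx$ whose generators are finite formal sums $\sum_\sigma g_\sigma$ with $g_\sigma\in\mathscr{F}(M*|\sigma|\cap U)$; by (W-1) each such $M*|\sigma|\cap U$ already belongs to $\mathcal{W}$.

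The map $\rho$ is then defined by sending a top \v{C}ech cocycle $\sum_\sigma g_\sigma$ to the class of the same formal sum in $\hat{\rm H}(\mathscr{F}(\mathcal{W}))$, and $b_\mathcal{W}\circ\rho=\mathrm{id}$ is immediate from the construction. The substantive difficulty, and what I expect to be the main obstacle, is showing that $\rho$ descends to the colimit: two \v{C}ech cocycles built for different $(\chi_k,U_k)$ representing the same local cohomology class must yield the same element of $\hat{\rm H}(\mathscr{F}(\mathcal{W}))$. This is precisely where (W-3) does the essential work. Given top cells $\sigma_1\in\Delta_{m-1}(\chi_1)$ and $\sigma_2\in\Delta_{m-1}(\chi_2)$ with both $M*{\rm St}_{\chi_k}(|\sigma_k|)\cap U$ contained in a common $W\in\mathcal{W}$, cone connectivity supplies a chain $\tau_1,\dots,\tau_\ell$ of top cells in a common refinement and a shrunk $U'\in\mathcal{U}$ such that each overlap $M*{\rm St}(|\tau_k\wedge\tau_{k+1}|)\cap U'$ lies in $W$; this is exactly what is needed to realize the \v{C}ech coboundary as a telescoping sum of restriction relations in $\mathcal{R}$. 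Once the well-definedness of $\rho$ is established, $\rho\circ b_\mathcal{W}=\mathrm{id}$ will follow by tracing a generator $f\in\mathscr{F}(W)$ through the same \v{C}ech representative used to define $b_\mathcal{W}(f)$.
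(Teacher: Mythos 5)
Your construction of $b_\mathcal{W}$ and your $\chi$-dependent model of the target (an acyclic complex built from the sets $M*{\rm St}_\chi(|\sigma|)\cap U$, with ${\rm H}^m_M$ presented as the cokernel of the top differential and $\rho$ defined cellwise on that presentation) are essentially the paper's route: this is $\beta_\chi$ of Lemma \ref{chi} and $\rho_\chi$ of Definition \ref{rho}. One correction already in the first half: the independence of $b_W$ from the auxiliary choices is not guaranteed by (T-2) and (U-2) alone; it is precisely where the paper uses (W-3) (Proposition \ref{indep}), since without cone connectivity two top cells whose stars both lie in $W$ could produce different boundary values.

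The genuine gap is in the well-definedness of $\rho$. You locate the whole difficulty in comparing cocycles attached to different $(\chi_k,U_k)$ and claim that (W-3) settles it by a telescoping argument. But (W-3) only compares two sections defined on star neighborhoods contained in a \emph{common} $W\in\mathcal{W}$, whereas a general class in $\hmx$ is a sum of sections $g_\sigma$ over \emph{all} top cells of $\chi$, which share no common wedge; equality of classes involves coboundaries and, crucially, the transition maps between the presentations for $\chi\prec\chi'$. The coboundary part is harmless: the sign identity $a_\mathds{1}(\sigma^+)<\tau,\sigma^+>+\,a_\mathds{1}(\sigma^-)<\tau,\sigma^->=0$ kills it, exactly as in the paper's check that $\rho_\chi$ is well defined, and (W-3) is not needed there. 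The transition maps are the real issue: the obvious degree-zero comparison $c_\sigma\mapsto<\sigma,\psi(\sigma)>c_\sigma|_{\psi(\sigma)}$ given by a choice function $\psi$ must be lifted to a full chain map $\Theta^\bullet$ between the resolutions $\mathscr{L}^\bullet_\chi$ and $\mathscr{L}^\bullet_{\chi'}$ over the identity of $\mathbb{C}_M$; otherwise the induced map does not even descend to the cokernel presentations, let alone realize $\beta_{\chi'}^{-1}\circ\beta_\chi$. The paper devotes Proposition \ref{finer ex} to this lifting, and it is not formal: it requires ${\rm Ext}^1(\mathbb{C}_{\overline{\sigma}},{\rm Ker}\,d^k_{\chi'})=0$, which reduces to ${\rm Ext}^\ell(\mathbb{C}_{\overline{\sigma}},\mathbb{C}_{\overline{\tau}})=0$ for $\ell\neq0$ (Lemma \ref{exlem}), which in turn rests on the geometric Lemma \ref{UV} that $M*{\rm St}_{\chi'}(|\tau|)\setminus M*{\rm St}_{\chi}(|\sigma|)$ is contractible when $\chi\prec\chi'$. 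None of this appears in your proposal, and without it the final step $\rho\circ b_\mathcal{W}=\mathrm{id}$ --- which forces you to compare the stratification adapted to a given $W$ with a fixed reference $\chi$ --- cannot be completed.
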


\subsection{Construction of the boundary value morphism $b_\mathcal{W}$}

First of all, we construct the boundary value morphism $b_\mathcal{W}$ from $\hat{\rm H}(\mathscr{F}(\mathcal{W}))$ to local cohomology groups thanks to Schapira's idea (p.497\cite{KS}).

Let $\mathscr{G}$ be a sheaf on $X$. We define the dual complex $D(\mathscr{G})$ of $\mathscr{G}$ by
\begin{equation*}
D(\mathscr{G})={\rm R}\mathscr{H}om_{\mathbb{C}_X}(\mathscr{G},\mathbb{C}_X).
\end{equation*}

Fix $W\in\mathcal{W}$.
By the condition (W-2), we can find $U\in\mathcal{U}$, $\chi\in\tf$ and $\sigma\in\Delta(\chi)$ satisfying $(M*{\rm St}(|\sigma|))\cap U\subset W$.
Hence we obtain the restriction map
\begin{equation}\label{rest}
\mathscr{F}(W)\rightarrow\mathscr{F}((M*{\rm St}(|\sigma|)) \cap U).
\end{equation}
We also get the restriction map of sheaves as $\overline{M*{\rm St}(|\sigma|)}\supset M$:
\begin{equation*}
\mathbb{C}_{\overline{M*{\rm St}(|\sigma|)}}\longrightarrow\mathbb{C}_M.
\end{equation*}
By applying $D(\bullet)$ and $\underset{\mathbb{C}_X}{\otimes}\mathbb{C}_U$ to the above morphism, we have
\begin{equation*}
\mathbb{C}_{(M*{\rm St}(|\sigma|)) \cap U}\longleftarrow D(\mathbb{C}_M).
\end{equation*}
\begin{rmk}\label{rmk1}
We note the following well-known facts.
\begin{equation}
D(\mathbb{C}_M)\simeq\mathbb{C}_M\underset{\mathbb{Z}_M}{\otimes} or_{M/X}[-m],
\end{equation}
\begin{equation}
{\rm H}^m_M(X,\mathbb{Z}_X)\simeq\Gamma(X, or_{M/X}),
\end{equation}
where $ or_{M/X}=\mathscr{H}^m_M(\mathbb{Z}_X)$ is the relative orientation sheaf on $M$.
\end{rmk}
Thus by applying ${\rm RHom}_{\mathbb{C}_X}(\bullet,\mathscr{F})$ to the above and taking the $0$-th cohomology, we obtain
\begin{equation}\label{bw}
\mathscr{F}((M*{\rm St}(|\sigma|)) \cap U)\rightarrow \hmx.
\end{equation}
We can get the morphism $b_W$ by composing \eqref{rest} and \eqref{bw};
\begin{equation}
\mathscr{F}(W)\rightarrow\mathscr{F}((M*{\rm St}(|\sigma|)) \cap U)\rightarrow \hmx.
\end{equation}
Then $b_W$ satisfies the proposition below.
\begin{prop}\label{indep}
The map $b_W$ does not depend on the choices of $U,\,\chi$ and $\sigma$.
\end{prop}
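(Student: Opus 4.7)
The plan is to prove independence in two stages: first a functoriality statement for the boundary value morphism with respect to refinements of the choice $(\chi,\sigma,U)$, and then use the cone connectivity condition (W-3) to link any two valid choices through a finite chain of such refinements.

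\textbf{Functoriality.} Fix two valid choices $(\chi,\sigma,U)$ and $(\chi',\sigma',U')$ with $V':=M*\mathrm{St}_{\chi'}(|\sigma'|)\cap U'\subset V:=M*\mathrm{St}_\chi(|\sigma|)\cap U$. Schapira's construction of $b_W$ factors through a commutative diagram of sheaves on $X$:
\begin{equation*}
\begin{CD}
\mathbb{C}_{\overline{V}} @>>> \mathbb{C}_M \\
@VVV @| \\
\mathbb{C}_{\overline{V'}} @>>> \mathbb{C}_M
\end{CD}
\end{equation*}
(horizontal arrows being the canonical projections, left vertical coming from $\overline{V'}\subset\overline{V}$). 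Applying $D(\bullet)$, tensoring with $\mathbb{C}_U$ (respectively $\mathbb{C}_{U'}$), passing to $R\mathrm{Hom}(\bullet,\mathscr{F})$ and $H^0$, and invoking (U-1) together with Remark~\ref{rmk1} to identify targets, yields a commutative square
\begin{equation*}
\begin{CD}
\mathscr{F}(V) @>>> \hmx \\
@VVV @| \\
\mathscr{F}(V') @>>> \hmx
\end{CD}
\end{equation*}
whose left vertical is restriction. Pre-composing with $\mathscr{F}(W)\to\mathscr{F}(V)$ then shows that both choices compute the same $b_W$.

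\textbf{Reduction and chain argument.} For any $(\chi,\sigma,U)$ there exists a top-dimensional cell $\tau\in\Delta_{m-1}(\chi)$ with $\sigma\subset\overline{\tau}$, whence $|\tau|\subset\mathrm{St}_\chi(|\sigma|)$ and functoriality lets me replace $\sigma$ by $\tau$. So it suffices to treat top-dimensional $\sigma_1,\sigma_2$. Pick $U\in\mathcal{U}$ with $U\subset U_1\cap U_2$ by (U-2); functoriality replaces each $U_i$ by $U$. Now apply (W-3) to obtain a common refinement $\chi$, top-dimensional cells $\tau_1,\dots,\tau_\ell\in\Delta_{m-1}(\chi)$ with $\tau_1\subset\sigma_1$ and $\tau_\ell\subset\sigma_2$, and $U'\in\mathcal{U}$ satisfying $M*\mathrm{St}(|\tau_k\wedge\tau_{k+1}|)\cap U'\subset W$ for each $k$; shrink $U'$ via (U-2) to arrange $U'\subset U$. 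Since $\tau_k\wedge\tau_{k+1}\subset\overline{\tau_k}\cap\overline{\tau_{k+1}}$, both $|\tau_k|$ and $|\tau_{k+1}|$ lie in $\mathrm{St}(|\tau_k\wedge\tau_{k+1}|)$, so functoriality applied to the intermediate valid choice $(\chi,\tau_k\wedge\tau_{k+1},U')$ forces $b_W$ via $(\chi,\tau_k,U')$ to equal $b_W$ via $(\chi,\tau_{k+1},U')$. At the ends, $|\tau_1|\subset|\sigma_1|=\mathrm{St}_{\chi_1}(|\sigma_1|)$ and analogously for $\tau_\ell$, so functoriality also equates $(\chi_i,\sigma_i,U)$ with $(\chi,\tau_1,U')$ and $(\chi,\tau_\ell,U')$ respectively. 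Concatenating these equalities proves the proposition.

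\textbf{Main obstacle.} The technical heart lies in justifying the functoriality step: one must check that every ingredient of Schapira's procedure (the quotient to $\mathbb{C}_M$, dualization, tensor with $\mathbb{C}_U$, and passage to $R\mathrm{Hom}$ and $H^0$) is natural under the open inclusion $V'\subset V$, so that the asserted squares genuinely commute. Once this naturality is secured, the rest is a geometric walk along the chain furnished by (W-3).
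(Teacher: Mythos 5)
Your proposal is correct and follows essentially the same route as the paper: the "functoriality" square obtained by dualizing the restriction maps of constant sheaves is exactly the commutative diagram the paper applies $\mathrm{RHom}(D(\bullet)\otimes\mathbb{C}_{U'},\mathscr{F})$ to, and the walk along the chain $\tau_1,\dots,\tau_\ell$ from (W-3), using $M*\mathrm{St}(|\tau_i|)\subset M*\mathrm{St}(|\tau_k\wedge\tau_{k+1}|)$, is the paper's argument verbatim. Your explicit preliminary reduction to top-dimensional cells is a small but welcome clarification, since (W-3) is stated only for cells in $\Delta_{m-1}$.
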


\begin{proof}
The fact that $b_W$ does not depend on the choices of $U\in\mathcal{U}$ is shown by the same argument as the following one.
Hence we only show that it is independent of the choices of $\chi$ and $\sigma$.
Let $\chi_1$ and $\chi_2$ be stratifications of $S^{m-1}$ which belong to $\mathcal{T}$, let $\sigma_1\in\Delta(\chi_1)$ and $\sigma_2\in\Delta(\chi_2)$.
We can find an open neighborhood $U'\in\mathcal{U}$, $\chi\in\mathcal{T}$ and $\tau_1,\dots,\tau_\ell\in\Delta_{m-1}(\chi)$ which satisfy the condition (W-3).
Furthermore, by (U-2), we may assume $U\supset U'$.
Then we obtain
\begin{equation*}
M*{\rm St}(|\tau_i|)\subset M*{\rm St}(|\tau_k\wedge\tau_{k+1}|)\quad(i=k,k+1).
\end{equation*}
Therefore we obtain the following commutative diagram for $i=k,k+1$.
\begin{equation*}
\xymatrix{
\mathbb{C}_{\overline{M*{\rm St}(|\tau_k\wedge\tau_{k+1}|)}} \ar[dr] \ar[r] &\mathbb{C}_{\overline{M*{\rm St}(|\tau_i|)}} \ar[d] \\
&\mathbb{C}_M.
}
\end{equation*}
By applying ${\rm RHom}(D(\bullet) \otimes \mathbb{C}_{U'},\mathscr{F})$ to the above diagram, we obtain the following commutative diagram for $i=k,k+1$.
\begin{equation*}
\xymatrix{
\mathscr{F}(W) \ar[r] \ar[dr] & \mathscr{F}((M*{\rm St}_\chi(|\tau_k\wedge\tau_{k+1}|)) \cap U') \ar[d] \ar[dr] & \\ 
 & \mathscr{F}((M*{\rm St}_\chi(|\tau_i|)) \cap U') \ar[r] & \hmx.
}
\end{equation*}
By the repeated applications of the same argument for $k=1,2,\dots,\ell-1$, we have
\begin{equation*}
\xymatrix{
\mathscr{F}(W) \ar[dr] \ar[r] & \mathscr{F}((M*{\rm St}_\chi(|\tau_1|)) \cap U') \ar[dr] & \\
 & \mathscr{F}((M*{\rm St}_\chi(|\tau_\ell|))\cap U') \ar[r] & \hmx.
}
\end{equation*}
Because of $\tau_1\subset\sigma_1$ and $\,\tau_\ell\subset\sigma_2$, we finally get the following commutative diagram.

\begin{equation*}
\xymatrix{
\mathscr{F}(W) \ar[r]  \ar[dr]  &\mathscr{F}((M*{\rm St}_\chi(|\sigma_1|)) \cap U') \ar[dr] & \\
  & \mathscr{F}((M*{\rm St}_\chi(|\sigma_2|)) \cap U') \ar[r] & \hmx.
}
\end{equation*}
Hence $b_W$ does not depend on the choices of $\chi$ and $\sigma$.
\end{proof}
The next corollary follows from Proposition \ref{indep} immediately.
\begin{cor}\label{cor1}
For any $W_1$ and $W_2$ in $\mathcal{W}$ with $W_2\subset W_1$, and for any $f\in\mathscr{F}(W_1)$, we have
\begin{equation}
b_{W_1}(f)=b_{W_2}(f|_{W_2}).
\end{equation}
\end{cor}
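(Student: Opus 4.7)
The plan is to compute both $b_{W_1}(f)$ and $b_{W_2}(f|_{W_2})$ through one and the same auxiliary choice of data $(U,\chi,\sigma)$, after which the claimed equality will reduce to the transitivity of restriction of sections of $\mathscr{F}$.

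First I would apply condition (W-2) to the smaller open set $W_2$ to obtain $U\in\mathcal{U}$, $\chi\in\mathcal{T}$ and $\sigma\in\Delta(\chi)$ with
\begin{equation*}
M*{\rm St}_\chi(|\sigma|)\cap U\subset W_2.
\end{equation*}
Since $W_2\subset W_1$ by hypothesis, the same inclusion holds with $W_1$ in place of $W_2$, so this triple is admissible for the construction of both $b_{W_1}$ and $b_{W_2}$. Proposition \ref{indep} ensures that each boundary value morphism is independent of the choices made in its construction, so we may legitimately use this common triple throughout.

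With this choice in hand, $b_{W_1}(f)$ is the image of $f$ in $\hmx$ under the composition
\begin{equation*}
\mathscr{F}(W_1)\longrightarrow\mathscr{F}(M*{\rm St}_\chi(|\sigma|)\cap U)\longrightarrow\hmx,
\end{equation*}
while $b_{W_2}(f|_{W_2})$ is the image of $f|_{W_2}$ under the analogous composition starting from $\mathscr{F}(W_2)$. Because restriction of $\mathscr{F}$-sections is functorial along the chain $M*{\rm St}_\chi(|\sigma|)\cap U\subset W_2\subset W_1$, the images of $f$ and of $f|_{W_2}$ in $\mathscr{F}(M*{\rm St}_\chi(|\sigma|)\cap U)$ coincide, and the subsequent morphism \eqref{bw} then yields the same element of $\hmx$. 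I do not anticipate any real obstacle: the genuine content is absorbed into Proposition \ref{indep}, and what remains is a formal diagram chase reflecting the compatibility of sheaf restrictions with composition.
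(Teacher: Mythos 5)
Your argument is correct and is precisely the intended one: the paper derives Corollary \ref{cor1} "immediately" from Proposition \ref{indep}, and the only natural way to do so is exactly your route of choosing a common admissible triple $(U,\chi,\sigma)$ inside $W_2\subset W_1$ and invoking transitivity of restriction. No gaps.
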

We finally extend $b_W$ to $b_\mathcal{W}$.
We get $b_\mathcal{W}$ by assigning $\underset{W\in\mathcal{W}}{\oplus}f_W\in\underset{W\in\mathcal{W}}{\oplus}\mathscr{F}(W)$ to $\displaystyle \sum_{W\in\mathcal{W}}b_W(f_W)$.
\begin{equation}
b_\mathcal{W}:\underset{W\in\mathcal{W}}{\oplus}\mathscr{F}(W)\longrightarrow\hmx.
\end{equation}
It follows from Corollary \ref{cor1} that $b_\mathcal{W}(f)=0$ holds for $f\in\mathcal{R}$.
Hence $b_\mathcal{W}$ passes through $\hat{\rm H}(\mathscr{F}(\mathcal{W}))$ and we obtain
\begin{equation}
b_\mathcal{W}:\hat{\rm H}(\mathscr{F}(\mathcal{W}))\longrightarrow\hmx\mathscr{a}.
\end{equation}

\subsection{The interpretation of the boundary value map $b_\mathcal{W}$}

Our next purpose is to compute the boundary value map $b_\mathcal{W}$ concretely.

Remember that all the cells are oriented.
Let $\mathds{1}$ denote a section which generates $or_{M/X}$ over $\mathbb{Z}_M$.
Each $\sigma\in\Delta_{m-1}(\chi)$ induces a section of $or_{M/X}(M)$, which is denoted by $\mathds{1}_\sigma$.

Let $\sigma\in\Delta(\chi)$ and $\tau\in\Delta(\chi')$ for $\chi,\chi'\in\mathcal{T}$.
We define $<\bullet,\bullet>$ which reflects the orientation of two cells under the following situations:
\begin{enumerate}
\item $\sigma$ and $\tau$ are cells of the same dimension and $\sigma\subset\tau$ or $\tau\subset\sigma$ holds.
\item $\chi=\chi'$, $\sigma\in\Delta_k(\chi)$ and $\tau\in\Delta_{k+1}(\chi)$ with $\sigma\subset\overline{\tau}$.
\end{enumerate}
\begin{dfn}
Let $\sigma$ and $\tau$ be cells satisfying the above condition either $1$ or $2$.
We define $<\sigma,\tau>$ by
\begin{eqnarray*}
<\sigma,\tau>=
\begin{cases}
1&(\text{the orientation of $\sigma$ or the induced orientation from $\sigma$ has}\\
&\text{the same orientation of $\tau$})\\
-1&(\text{otherwise}).
\end{cases}
\end{eqnarray*}
\end{dfn}

Furthermore, we introduce a mapping whose image is in locally constant functions on $M$
$$
a_\mathds{1}(\bullet):\Delta_{m-1}(\chi)\longrightarrow\mathbb{Z}_M(M)
$$
defined by the formula
$$
\mathds{1}_\sigma=a_\mathds{1}(\sigma)\cdot\mathds{1}\quad(\sigma\in\Delta_{m-1}(\chi)).
$$

We construct an exact sequence which is needed when we compute the concrete expression of $b_\mathcal{W}$.
Let $\chi$ be in $\mathcal{T}$.
\begin{dfn}
Let $k=0,1,\cdots,m-1$.
We define the sheaf $\mathscr{L}^{k-m+1}_\chi$ as follows.
\begin{eqnarray}
\mathscr{L}^{k-m+1}_\chi=\underset{\sigma\in\Delta_{k}(\chi)}{\oplus}\mathbb{C}_{\overline{\sigma}},
\end{eqnarray}
where $\mathbb{C}_{\overline{\sigma}}=\mathbb{C}_{\overline{M*{\rm St}(|\sigma|)}}$.
More precisely, $\mathbb{C}_{\overline{\sigma}}$ is the sheaf such that $\mathbb{C}_{\overline{\sigma}}(U)$ consists of pairs $(\sigma, s)$ with $s \in \mathbb{C}_{\overline{M * {\rm St}(|\sigma|)}}(U)$ for an open set $U\subset X$.

\end{dfn}

For convenience, we set $\mathscr{L}^{-m}_\chi=\mathbb{C}_X$.
We also note that, for $\sigma \subset \tau$, the image of $c_\sigma \in \mathbb{C}_{\overline{\sigma}}$ by the canonical map 
$\mathbb{C}_{\overline{\sigma}} \to \mathbb{C}_{\overline{\tau}}$
is denoted by $c_\sigma|_\tau$ for simplicity.

Then we have the following sequence.
\begin{eqnarray}{\label{mainexact}}
0\longrightarrow
\mathscr{L}^{-m}_\chi\underset{\mathbb{Z}_X}{\otimes} i_*or_{M/X}\overset{d^{-m}_\chi}{\longrightarrow}
\cdots\overset{d^{-1}_\chi}{\longrightarrow}
\mathscr{L}^{0}_\chi\underset{\mathbb{Z}_X}{\otimes} i_*or_{M/X}\overset{d^{0}_\chi}{\longrightarrow}
\mathbb{C}_M\longrightarrow0,
\end{eqnarray}
where the map $i$ is the embedding $i:M\longrightarrow X$.
Note that, in the above sequence, the leftmost term $\mathscr{L}^{-m}_\chi\underset{\mathbb{Z}_X}{\otimes} i_*or_{M/X}$ is located at degree $-m$ and the term $\mathscr{L}^{0}_\chi\underset{\mathbb{Z}_X}{\otimes} i_*or_{M/X}$ is at degree $0$.
Here, for any $\sigma\in\Delta_{k}(\chi)\ (k=0,1,\cdots,m-2)$ and for any $c_\sigma\in \mathbb{C}_{\overline{\sigma}}$, we define $d^{k-m+1}_\chi$ as follows.
\begin{eqnarray*}
d^{k-m+1}_\chi(c_\sigma\otimes \ell\mathds{1})=\underset{\substack{\sigma\subset\overline{\tau} \\ \tau\in\Delta_{k+1}(\chi)}}{\oplus} (<\sigma,\tau> c_\sigma|_\tau \otimes \ell\mathds{1}),
\end{eqnarray*}
where $\ell\mathds{1}\in i_*or_{M/X}$ with $\ell\in\mathbb{Z}_M(M)$.
We also define $d^{-m}_\chi$ and $d^0_\chi$ by
\begin{eqnarray*}
d^{-m}_\chi(c\otimes \ell\mathds{1})&=&\underset{\sigma\in\Delta_0(\chi)}{\oplus}c|_\sigma\otimes \ell\mathds{1}, \\
d^0_\chi(c_\sigma\otimes \ell\mathds{1})&=&\ell a_\mathds{1}(\sigma)c_\sigma,
\end{eqnarray*}
where $c\in\mathbb{C}_X$, $c_\sigma\in \mathbb{C}_{\overline{\sigma}}$ with $\sigma\in\Delta_{m-1}(\chi)$ and $\ell\mathds{1}\in i_*or_{M/X}$ with $\ell\in\mathbb{Z}_M(M)$.
We remark that $d^0_\chi$ does not depend on the choices of $\mathds{1}$.

We write $d^k$ and $\mathscr{L}^k_x$ instead of $d^k_\chi$ and $(\mathscr{L}^k_\chi)_x$, respectively if there is no risk of confusion.

\begin{lem}
We have $d^{k+1}\circ d^{k}=0$ for $k=-m,-m+1,\cdots,-1$.
\end{lem}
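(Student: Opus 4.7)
The plan is to verify the identity stalkwise, separately for the three qualitatively different pieces of the complex: the bottom edge $d^{-m+1}\circ d^{-m}$, the interior compositions $d^{k+2-m}\circ d^{k+1-m}$ for $0\le k\le m-3$, and the top edge $d^0\circ d^{-1}$. In every case the coefficient sheaf $i_*or_{M/X}$ and the scalar $\ell$ are inert, so the identity reduces to a combinatorial one about the oriented cellular structure $\chi$ on $S^{m-1}$ and its incidence signs $\langle\cdot,\cdot\rangle$.

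For the interior case, I would fix $\sigma\in\Delta_k(\chi)$ and trace an element $c_\sigma\otimes\ell\mathds{1}$. Applying both differentials, the $\rho$-component for $\rho\in\Delta_{k+2}(\chi)$ with $\sigma\subset\overline{\rho}$ is
$$
\Bigl(\sum_{\tau}\langle\sigma,\tau\rangle\langle\tau,\rho\rangle\Bigr)\,c_\sigma|_\rho\otimes\ell\mathds{1},
$$
where $\tau$ ranges over $(k+1)$-cells with $\sigma\subset\overline{\tau}\subset\overline{\rho}$. Because $\chi$ arises from a hyperplane arrangement by (T-1), $\overline{\rho}$ is locally a convex linear cell, so exactly two intermediate $\tau$'s exist, and the classical oriented-diamond identity for cellular boundary maps makes their signed contributions cancel. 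The bottom case is handled in the same spirit: every $1$-cell $\tau\in\Delta_1(\chi)$ is an oriented arc with exactly two endpoints in $\Delta_0(\chi)$ carrying opposite signs $\langle\cdot,\tau\rangle$, and since $(c|_\sigma)|_\tau=c|_\tau$ for each such endpoint $\sigma$ (the restriction factoring through $\mathbb{C}_X$), the sum over the two endpoints vanishes.

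The top case is the main obstacle. Unpacking the composition at $c_\sigma\otimes\ell\mathds{1}$ for $\sigma\in\Delta_{m-2}(\chi)$ gives
$$
(d^0\circ d^{-1})(c_\sigma\otimes\ell\mathds{1})=\ell\Bigl(\sum_{\tau\in\Delta_{m-1}(\chi),\ \sigma\subset\overline{\tau}}\langle\sigma,\tau\rangle\,a_\mathds{1}(\tau)\Bigr)\,c_\sigma|_M,
$$
so everything reduces to the identity $\langle\sigma,\tau_1\rangle a_\mathds{1}(\tau_1)+\langle\sigma,\tau_2\rangle a_\mathds{1}(\tau_2)=0$ for the two top cells $\tau_1,\tau_2$ adjacent to $\sigma$. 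Unlike the interior case, this mixes the cellular incidence sign $\langle\sigma,\tau_i\rangle$ with the orientation coefficient $a_\mathds{1}(\tau_i)$ of $or_{M/X}$, so it requires matching the orientation conventions directly. I would choose a basis of $\mathbb{R}^m$ adapted to the splitting determined by the hyperplane supporting $\sigma$ and observe that $\tau_1$ and $\tau_2$ lie on opposite sides of this hyperplane; the full-dimensional orientations induced on $or_{M/X}$ by each $\tau_i$ (together with the outward radial direction) then differ by the sign of the corresponding reflection, which is precisely what cancels the change in $\langle\sigma,\tau_i\rangle$ between the two summands.
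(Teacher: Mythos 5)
Your proposal is correct and follows essentially the same route as the paper: a stalkwise/combinatorial verification split into the interior and bottom compositions (handled by the two-intermediate-cells diamond cancellation, using that exactly two $(k+1)$-cells sit between $\sigma$ and $\delta$) and the top composition $d^{0}\circ d^{-1}$, which reduces to $\langle\sigma,\tau_1\rangle a_{\mathds{1}}(\tau_1)+\langle\sigma,\tau_2\rangle a_{\mathds{1}}(\tau_2)=0$ for the two $(m-1)$-cells adjacent to $\sigma$. The only difference is that you sketch a justification of this last sign identity via the opposite-sides-of-the-supporting-hyperplane argument, whereas the paper simply asserts it as a fact.
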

\begin{proof}
Firstly we prove $d^{0}\circ d^{-1}=0$.
Let $c_\sigma\in\mathbb{C}_{\overline{\sigma}}$ with $\sigma\in\Delta_{m-2}(\chi)$ and $\mathds{1}\in i_*or_{M/X}$.
Here we remark that, there exists just two cells whose closure contains $\sigma$.
Let $\tau$ and $\tau'$ be mutually distinct cells whose closure contains $\sigma$.
Then we obtain
\begin{flalign*}
&d^{0}\circ d^{-1}(c_\sigma\otimes\mathds{1}) \\
=\,&d^0(<\sigma,\tau> c_\sigma|_\tau \otimes \mathds{1}\,\oplus<\sigma,\tau'> c_\sigma|_{\tau'} \otimes \mathds{1} )\\
=\,&<\sigma,\tau> a_\mathds{1}(\tau)c_\sigma|_\tau + <\sigma,\tau'> a_\mathds{1}(\tau')c_\sigma|_{\tau'} \\
=\,&0.
\end{flalign*}
Here the last equation follows from the fact
$$
<\sigma,\tau>a_\mathds{1}(\tau)=-<\sigma,\tau'>a_\mathds{1}(\tau').
$$

Next we prove $d^{k+1}\circ d^{k}=0$ for $k=-m,-m+1,\cdots,-2$.
We remark that for any $\sigma\in\Delta_k(\chi)$ and $\tau\in\Delta_{k+1}(\chi)$ we get $<\sigma,\tau>$ by comparing the orientation induced by $\tau$ with the orientation of $\sigma$.
Here the orientation of $\sigma$ is determined so that the outward pointing vector of $\tau$ followed by a positive  frame of $\sigma$ from that of $\tau$.
Note that, for any $\delta\in\Delta_{k+2}(\chi)$ and $\sigma\in\Delta_k(\chi)$ which satisfy $\sigma\subset\overline{\delta}$,
there exist just two $(k+1)$-cells which are contained in the closure of $\delta$ and whose closure contain $\sigma$.
We denote by them $\tau$ and $\tau'$.
By the above observation we have
\begin{flalign*}
d^{k+1}\circ d^{k}(c_\sigma)
=\,&d^{k+1}(\underset{\substack{\sigma\subset\overline{\tau} \\ \tau\in\Delta_{k+1}(\chi)}}{\oplus} (<\sigma,\tau> c_\sigma|_\tau \otimes \mathds{1})) \\
=\,&\underset{\substack{\tau\subset\overline{\delta} \\ \delta\in\Delta_{k+2}(\chi)}}{\oplus}
\underset{\substack{\sigma\subset\overline{\tau} \\ \tau\in\Delta_{k+1}(\chi)}}{\oplus} (<\sigma,\tau><\tau,\delta> c_\sigma|_\delta \otimes \mathds{1}) \\
=\,&\underset{\substack{\sigma\subset\overline{\delta} \\ \delta\in\Delta_{k+2}(\chi)}}{\oplus}
(<\sigma,\tau><\tau,\delta>\ +<\sigma,\tau'><\tau',\delta>)c_\sigma|_\delta \otimes \mathds{1} \\
=\,&0,
\end{flalign*}
where $c_\sigma\in\mathbb{C}_{\overline{\sigma}}$ and $\mathds{1}\in i_*or_{M/X}$.
\end{proof}
By this lemma we see that the sequence \eqref{mainexact} is a complex.
\begin{prop}\label{exact}
The complex \eqref{mainexact} is exact.
\end{prop}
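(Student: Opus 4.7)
The plan is to verify exactness at the level of stalks. At a point $x \notin M$, the sheaf $i_{*}or_{M/X}$ has trivial stalk, so after tensoring the entire complex collapses to zero and exactness is automatic.

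The substantial case is $x \in M$. For every $\sigma \in \Delta(\chi)$, the closure $\overline{M*{\rm St}(|\sigma|)}$ contains the zero section $M\times\{0\}$ (take $t\to 0$ in the defining formula), hence $(\mathbb{C}_{\overline{\sigma}})_x = \mathbb{C}$. Fixing a local generator $\mathds{1}$ of $(or_{M/X})_x$, the stalk complex becomes
\begin{equation*}
0 \to \mathbb{C} \xrightarrow{d^{-m}} \bigoplus_{\sigma \in \Delta_0(\chi)} \mathbb{C} \xrightarrow{d^{-m+1}} \cdots \xrightarrow{d^{-1}} \bigoplus_{\sigma \in \Delta_{m-1}(\chi)} \mathbb{C} \xrightarrow{d^{0}} \mathbb{C} \to 0,
\end{equation*}
with $d^{-m}$ the diagonal embedding, and each interior differential sending the basis vector attached to a $k$-cell $\sigma$ to $\sum_{\sigma\subset\overline{\tau}}\,<\sigma,\tau>$ times the basis vector attached to $\tau$.

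Next, I identify the truncated piece $0 \to \mathbb{C} \to \bigoplus_{\Delta_0}\mathbb{C} \to \cdots \to \bigoplus_{\Delta_{m-1}}\mathbb{C} \to 0$ with the co-augmented cellular cochain complex of $S^{m-1}$ built from the regular cell decomposition $\chi$. This regularity is guaranteed by condition (T-1), since linear cells arising from finitely many hyperplanes through the origin tile $S^{m-1}$ as a genuine regular CW complex. Because $\tilde H^k(S^{m-1};\mathbb{C})$ vanishes for $k \neq m-1$ and equals $\mathbb{C}$ for $k=m-1$, exactness is obtained at every position except possibly at $\mathscr{L}^0_\chi\otimes i_{*}or_{M/X}$ and at $\mathbb{C}_M$.

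To close the two remaining spots I recognize $d^0$ as pairing with the fundamental cocycle of $S^{m-1}$: by construction $d^0\bigl((c_\sigma)_\sigma\bigr) = \sum_\sigma a_\mathds{1}(\sigma)c_\sigma$, and the sign $a_\mathds{1}(\sigma)\in\{\pm 1\}$ compares the intrinsic orientation of the top-dimensional cell $\sigma$ with the orientation of $S^{m-1}$ determined by $\mathds{1}$ under the isomorphism $or_{M/X}\simeq \mathscr{H}^m_M(\mathbb{Z}_X)$ recalled in Remark \ref{rmk1}. Thus the linear functional $(c_\sigma)\mapsto \sum_\sigma a_\mathds{1}(\sigma)c_\sigma$ is evaluation against the generator of $\tilde H^{m-1}(S^{m-1};\mathbb{C})$; it is surjective and its kernel is exactly the image of $d^{-1}$, giving the last two exactness statements. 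The principal obstacle I expect is precisely this orientation bookkeeping: one must verify that the signs $a_\mathds{1}(\sigma)$ and the incidence numbers $<\sigma,\tau>$ are mutually consistent, so that the right-hand augmentation truly represents the fundamental class rather than an unrelated generator of the top cohomology.
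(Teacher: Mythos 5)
Your handling of the case $x\in M$ is essentially the paper's own argument: the paper dualizes the stalk complex to the cellular chain complex of $S^{m-1}$ (Lemmas \ref{isoL} and \ref{lem2}) and uses ${\rm H}_0(S^{m-1})\cong\mathbb{C}$ together with ${\rm H}_{m-1}(S^{m-1})\cong\mathbb{C}\otimes(or_{M/X})_x$ to absorb the two augmentations $d^{-m}$ and $d^0$; your identification of $d^0$ with evaluation against the fundamental cocycle, with the signs $a_{\mathds{1}}(\sigma)$ mediating between cell orientations and the generator of $or_{M/X}$, is exactly the orientation bookkeeping the paper performs (tersely) at this point. That half of your argument is sound.

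The gap is your first sentence. You dismiss $x\notin M$ on the grounds that $(i_*or_{M/X})_x=0$ off the closed set $M$, so the complex tensors to zero there. A literal reading of the symbol $i_*$ supports you, but this cannot be the intended meaning: if every term of \eqref{mainexact} were supported on $M$, the resolution would be useless in Lemma \ref{chi}, where $D(\bullet)\otimes\mathbb{C}_U$ is applied termwise to produce the constant sheaves $\mathbb{C}_{M*{\rm St}(|\sigma|)\cap U}$ and hence the section spaces $\mathscr{F}(M*{\rm St}(|\sigma|)\cap U)$ appearing in the boundary value map. For that computation the factor $\mathbb{C}_{\overline{\sigma}}\otimes i_*or_{M/X}$ must be a rank-one (twisted) constant sheaf on the whole closed cone $\overline{M*{\rm St}(|\sigma|)}$, i.e., the orientation factor is to be understood as the rank-one local system extended to $X=M\times D^m$ through the projection. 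Under that reading the stalk complex at $x\notin M$ does not vanish: by Lemma \ref{lem2} it is the coaugmented cochain complex of the subcomplex of cells $\tau$ with $\tau\subset\overline{{\rm St}_\chi(|\sigma_x|)}$, where $\sigma_x$ is the unique cell with $x\in M*\sigma_x$, mapping to $(\mathbb{C}_M)_x=0$ on the right. Exactness there amounts to the acyclicity of the closed star $\overline{{\rm St}_\chi(|\sigma_x|)}$, which the paper derives from its contractibility (it is cut out by half-spaces through the origin, cf.\ Lemma \ref{st}, so condition (T-1) enters essentially). This is roughly half of the paper's proof and is missing from yours; you need to supply it rather than appeal to the support of $i_*or_{M/X}$.
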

Before starting the proof of Proposition \ref{exact}, we introduce some definitions and lemmas.
\begin{dfn}
The set $\widetilde{\Delta}_k(\chi,x)$ is defined as follows.
If $x\in M$, we define $\widetilde{\Delta}_k(\chi,x)=\Delta_k(\chi)$,
and if $x\notin M$, we define
\begin{equation}
\widetilde{\Delta}_k(\chi,x)=\{\tau\in\Delta_k(\chi)\mid\tau\subset\overline{{\rm St}(|\sigma_x|)}\},
\end{equation}
where $\sigma_x\in\Delta(\chi)$ is the unique cell with $x\in M*\sigma_x$.
\end{dfn}
Then the following lemma holds.
\begin{lem}{\label{isoL}}
We have the isomorphism $\alpha^k$
\begin{equation}
\alpha^k:{\rm Hom}_\mathbb{C}(\widetilde{\Delta}_k(\chi,x),\mathbb{C})\overset{\sim}{\longrightarrow}\mathscr{L}^{k-m+1}_x.
\end{equation}
\end{lem}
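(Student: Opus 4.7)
The plan is to compute the stalk $\mathscr{L}^{k-m+1}_x$ directly from the definition $\mathscr{L}^{k-m+1}_\chi=\bigoplus_{\sigma\in\Delta_k(\chi)}\mathbb{C}_{\overline{M*{\rm St}(|\sigma|)}}$ and then identify the set of indices for which the summand is non-zero with $\widetilde{\Delta}_k(\chi,x)$. Since the stalk functor commutes with direct sums and the stalk of a constant sheaf on a closed set $Z$ at $x$ is $\mathbb{C}$ if $x\in Z$ and $0$ otherwise, we obtain
\begin{equation*}
\mathscr{L}^{k-m+1}_x=\bigoplus_{\substack{\sigma\in\Delta_k(\chi)\\ x\in\overline{M*{\rm St}(|\sigma|)}}}\mathbb{C},
\end{equation*}
whereas ${\rm Hom}_\mathbb{C}(\widetilde{\Delta}_k(\chi,x),\mathbb{C})$ is, tautologically, the direct sum of copies of $\mathbb{C}$ indexed by $\widetilde{\Delta}_k(\chi,x)$. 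Hence $\alpha^k$ is forced to be the obvious identification on indexing sets, and the lemma reduces to the combinatorial identity
\begin{equation*}
\widetilde{\Delta}_k(\chi,x)=\{\sigma\in\Delta_k(\chi)\,;\,x\in\overline{M*{\rm St}(|\sigma|)}\}.
\end{equation*}

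I would verify this identity by splitting into the two cases used in the definition of $\widetilde{\Delta}_k(\chi,x)$. If $x\in M$, both sides equal $\Delta_k(\chi)$: for any non-empty $K\subset S^{m-1}$ the inclusion $M\subset \overline{M*K}$ follows from the convergence $(x,ty)\to(x,0)$ as $t\to 0^+$ under the identification $X\simeq M\times D^m$. If $x\notin M$, write $x=(x_0,ty_0)$ with $t\in(0,1)$ and $y_0\in\sigma_x$. Using the description
\begin{equation*}
\overline{M*{\rm St}(|\sigma|)}=\{(x,sy)\,;\,x\in M,\ y\in\overline{{\rm St}(|\sigma|)},\ s\in[0,1)\}\cup(M\times\{0\}),
\end{equation*}
the condition $x\in\overline{M*{\rm St}(|\sigma|)}$ becomes $y_0\in\overline{{\rm St}(|\sigma|)}\subset S^{m-1}$.

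The key step is the symmetrisation: by the stratification axiom $\overline{U_\lambda}\cap U_\tau\neq\emptyset\Rightarrow U_\tau\subset\overline{U_\lambda}$, the condition $y_0\in\overline{{\rm St}(|\sigma|)}$ (i.e.\ $y_0\in\overline{|\tau|}$ for some $\tau\in\Delta(\chi)$ with $\sigma\subset\overline\tau$) is equivalent to the existence of $\tau\in\Delta(\chi)$ satisfying $\sigma\subset\overline\tau$ and $\sigma_x\subset\overline\tau$. This statement is manifestly symmetric in $\sigma$ and $\sigma_x$, so it is also equivalent to $\sigma\subset\overline{{\rm St}(|\sigma_x|)}$, that is, $\sigma\in\widetilde{\Delta}_k(\chi,x)$. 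This symmetrisation is the only non-formal step; everything else is a direct unravelling of definitions, and a minor verification to carry out along the way is the explicit description of $\overline{M*{\rm St}(|\sigma|)}$ in $\widetilde X$ that was used above, which lets us pass from closures in $\widetilde X$ to closures in $S^{m-1}$.
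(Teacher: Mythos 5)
Your proposal is correct and follows the same route as the paper, which simply exhibits the correspondence $\varphi\mapsto\underset{\sigma\in\widetilde{\Delta}_k(\chi,x)}{\oplus}\varphi(\sigma)1_\sigma$ and declares the isomorphism immediate. You supply the detail the paper leaves implicit, namely the identification of the non-vanishing stalk indices $\{\sigma\,;\,x\in\overline{M*{\rm St}(|\sigma|)}\}$ with $\widetilde{\Delta}_k(\chi,x)$ via the stratification axiom, and that symmetrisation argument is sound.
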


The proof follows immediately from correspondence below.
\begin{eqnarray}\label{corres}
\begin{array}{ccc}
\alpha^k:{\rm Hom}_\mathbb{C}(\widetilde{\Delta}_k(\chi,x),\mathbb{C})   & \longrightarrow & \mathscr{L}^{k-m+1}_x \\[2pt]
\rotatebox{90}{$\in$} &                 & \rotatebox{90}{$\in$} \\[-1pt]
\varphi & \longmapsto & \underset{\sigma\in\widetilde{\Delta}_k(\chi,x)}{\oplus}\varphi(\sigma)1_\sigma.
\end{array}
\end{eqnarray}
Here $1_\sigma$ denotes the image of $1_X$ by $\mathbb{C}_X(X)\longrightarrow\mathbb{C}_{\overline{\sigma}}(X)$.
\begin{lem}{\label{lem2}}
Two sequences
\begin{equation}{\label{delta}}
0\longleftarrow\widetilde{\Delta}_0(\chi,x)\overset{\partial}{\longleftarrow}\dots\overset{\partial}{\longleftarrow}\widetilde{\Delta}_{m-1}(\chi,x)\longleftarrow0,
\end{equation}
\begin{equation}{\label{Lseq}}
0{\longrightarrow}\mathscr{L}^{-m+1}_x\overset{d^{-m+1}_\chi}{\longrightarrow}
\cdots\overset{d^{-1}_\chi}{\longrightarrow}
\mathscr{L}^{0}_x\longrightarrow0
\end{equation}
are dual to each other. Here $\partial$ is the boundary operator of the simplicial complex.
\end{lem}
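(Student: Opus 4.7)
The plan is to reduce the duality of the two sequences to a direct verification that, under the isomorphism $\alpha^\bullet$ of Lemma \ref{isoL}, the differential $d^{k-m+1}_\chi$ on stalks is precisely the transpose of the simplicial boundary operator $\partial$. That is, I would show the identity
\begin{equation*}
d^{k-m+1}_\chi \circ \alpha^k \;=\; \alpha^{k+1} \circ \partial^\vee,
\end{equation*}
where $\partial^\vee : \mathrm{Hom}_{\mathbb C}(\widetilde{\Delta}_k(\chi,x),\mathbb C) \to \mathrm{Hom}_{\mathbb C}(\widetilde{\Delta}_{k+1}(\chi,x),\mathbb C)$ is the $\mathbb C$-linear dual of $\partial$.

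First I unwind $\alpha^k$: given $\varphi \in \mathrm{Hom}_{\mathbb C}(\widetilde{\Delta}_k(\chi,x),\mathbb C)$, the correspondence \eqref{corres} gives
$\alpha^k(\varphi) = \bigoplus_{\sigma \in \widetilde{\Delta}_k(\chi,x)}\varphi(\sigma)\,1_\sigma.$
Applying the definition of $d^{k-m+1}_\chi$ from the main text, and noting that the canonical map sends $1_\sigma$ to $1_\tau$ whenever $\sigma \subset \overline\tau$, I compute
\begin{equation*}
d^{k-m+1}_\chi(\alpha^k(\varphi))
= \bigoplus_{\sigma \in \widetilde{\Delta}_k(\chi,x)} \varphi(\sigma)\, \bigoplus_{\substack{\sigma \subset \overline\tau \\ \tau \in \Delta_{k+1}(\chi)}} \langle \sigma,\tau\rangle\, 1_\tau .
\end{equation*}
Then I would verify the key point that only indices $\tau \in \widetilde{\Delta}_{k+1}(\chi,x)$ contribute to the stalk at $x$, and that whenever such a $\tau$ contains a face $\sigma$, that face automatically lies in $\widetilde{\Delta}_k(\chi,x)$ (since $\overline\sigma \subset \overline\tau \subset \overline{\mathrm{St}(|\sigma_x|)}$ when $x \notin M$, and trivially when $x \in M$). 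Rearranging the double sum by collecting $\tau$ on the outside then yields
\begin{equation*}
d^{k-m+1}_\chi(\alpha^k(\varphi))
= \bigoplus_{\tau \in \widetilde{\Delta}_{k+1}(\chi,x)} \Bigl(\sum_{\substack{\sigma \subset \overline\tau \\ \sigma \in \widetilde{\Delta}_k(\chi,x)}} \langle \sigma,\tau\rangle \varphi(\sigma)\Bigr) 1_\tau
= \alpha^{k+1}\bigl(\partial^\vee\varphi\bigr),
\end{equation*}
provided the simplicial boundary is written as $\partial\tau = \sum_{\sigma \subset \overline\tau}\langle\sigma,\tau\rangle\,\sigma$.

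The main (and only delicate) point will be this last identification of signs: I need the incidence coefficient $\langle\sigma,\tau\rangle$ defined earlier via comparison of oriented frames to agree with the usual simplicial incidence number appearing in $\partial\tau$. This, however, is exactly the outward-normal orientation convention already used in the proof that $d^{k+1}\circ d^k = 0$, where the vanishing rests precisely on $\langle\sigma,\tau\rangle\langle\tau,\delta\rangle + \langle\sigma,\tau'\rangle\langle\tau',\delta\rangle = 0$, i.e.\ the cocycle identity characterising the incidence numbers of an oriented cell complex. Thus the sign conventions match tautologically, and once this is observed the computation above completes the proof of Lemma \ref{lem2}.
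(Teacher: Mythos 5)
Your proposal is correct and follows essentially the same route as the paper: both reduce the duality to the commutativity of the square $d^{k-m+1}\circ\alpha^{k}=\alpha^{k+1}\circ\partial^{\ast}$ and then verify it by unwinding the definitions and checking that the index sets $\{(\sigma,\tau):\sigma\subset\overline{\tau},\ \sigma\in\widetilde{\Delta}_k(\chi,x)\}$ and $\{(\sigma,\tau):\sigma\subset\overline{\tau},\ \tau\in\widetilde{\Delta}_{k+1}(\chi,x)\}$ coincide via $\overline{\sigma}\subset\overline{\tau}\subset\overline{{\rm St}(|\sigma_x|)}$. The only cosmetic difference is your closing discussion of sign conventions, which in the paper is a non-issue because $\partial^{\ast}$ is written directly with the same incidence coefficients $<\sigma,\tau>$.
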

\begin{proof}
By applying the exact functor ${\rm Hom}_\mathbb{C}(\bullet,\mathbb{C})$ to \eqref{delta}, we obtain
\begin{equation}{\label{dual}}
0\longrightarrow{\rm Hom}_\mathbb{C}(\widetilde{\Delta}_0(\chi,x),\mathbb{C})\overset{\partial^\ast}{\longrightarrow}\dots\overset{\partial^\ast}{\longrightarrow}{\rm Hom}_\mathbb{C}(\widetilde{\Delta}_{m-1}(\chi,x),\mathbb{C})\longrightarrow0,
\end{equation}
where $\partial^*={\rm Hom}_\mathbb{C}(\partial,\mathbb{C})$.
Due to Lemma \ref{isoL} we just prove the commutativity of the diagram below.
\begin{eqnarray}{\label{dia}}
\begin{CD}
\mathscr{L}^{k-m+1}_x @>d^{k-m+1}>> \mathscr{L}^{k-m+2}_x\\
@A\alpha^k AA @AA\alpha^{k+1} A \\
{\rm Hom}_\mathbb{C}(\widetilde{\Delta}_k(\chi,x),\mathbb{C}) @>>\partial^\ast> {\rm Hom}_\mathbb{C}(\widetilde{\Delta}_{k+1}(\chi,x),\mathbb{C}).
\end{CD}
\end{eqnarray}
Here we recall that 
\begin{equation*}
\partial^\ast(\varphi)=\underset{\sigma\subset\overline{\tau}}{\oplus}<\sigma,\tau>\varphi(\sigma).
\end{equation*}
Then we have
$$
(d^{k-m+1}\circ \alpha^k)(\varphi)
=\underset{\substack{\tau\in\Delta_{k+1}(\chi,x) \\ \sigma\subset\overline{\tau}}}{\oplus} <\sigma,\tau> \varphi(\sigma)1_\tau.
$$
On the other hand, we have
$$
(\alpha^{k+1}\circ \partial^*)(\varphi)
=\underset{\substack{\sigma\in\Delta_{k}(\chi,x) \\ \sigma\subset\overline{\tau}}}{\oplus} <\sigma,\tau> \varphi(\sigma)1_\tau.
$$
Note that both the indices sets in the above two computations coincide because
\begin{flalign*}
&\{ (\sigma,\tau)\, |\, \sigma\subset\overline{\tau}, \sigma\in\Delta_{k}(\chi,x) \}
=\{ (\sigma,\tau)\, |\, \sigma\subset\overline{\tau}, \sigma\in\Delta_{k}(\chi),\, \sigma\subset\overline{{\rm St}(|\delta_x|)} \} \\
=&\{ (\sigma,\tau)\, |\, \sigma\subset\overline{\tau}, \sigma\subset\overline{\tau}\subset\overline{{\rm St}(|\delta_x|)} \}
=\{ (\sigma,\tau)\, |\, \sigma\subset\overline{\tau}, \tau\in\Delta_{k+1}(\chi),\, \tau\subset\overline{{\rm St}(|\delta_x|)} \} \\
=&\{ (\sigma,\tau)\, |\, \sigma\subset\overline{\tau}, \tau\in\Delta_{k+1}(\chi,x) \},
\end{flalign*}
where the cell $\delta_x$ is the unique cell satisfying $x\in M*\delta_x$.
Hence \eqref{dia} becomes a commutative diagram.
\end{proof}
Now we are ready to prove Proposition \ref{exact}.
\begin{proof}
It is enough to prove that the following sequence is exact.
\begin{eqnarray*}
0\longrightarrow
\mathscr{L}^{-m}_x\underset{\mathbb{Z}}{\otimes}(i_*or_{M/X})_x\overset{d^{-m}}{\longrightarrow}
\cdots\overset{d^{-1}}{\longrightarrow}
\mathscr{L}^{0}_x\underset{\mathbb{Z}}{\otimes}(i_*or_{M/X})_x\overset{d^{0}}{\longrightarrow}
(\mathbb{C}_M)_x\longrightarrow0.
\end{eqnarray*}
By Lemma \ref{lem2}, the claim follows from the homology groups of $\overline{{\rm St}(|\delta_x|)}$ if $x \notin M$ and $S^{m-1}$ if $x \in M$.
\begin{enumerate}
\item  The case of $x\notin M$.

Since $\overline{{\rm St}(|\delta_x|)}$ is contractible to a point in $|\delta_x|$, its homology groups vanish except for the $0$-th group, that is
\begin{equation*}
{\rm H}_k(\overline{{\rm St}(|\delta_x|)},\mathbb{C})=0\quad(k\neq 0).
\end{equation*}
Hence the complex \eqref{Lseq} is concentrated in degree $-m+1$ and its $(-m+1)$-th cohomology group is $\mathbb{C}$, and we have the exact sequence:
\begin{eqnarray*}
0\longrightarrow
\mathscr{L}^{-m}_x\underset{\mathbb{Z}}{\otimes}(i_*or_{M/X})_x\overset{d^{-m}}{\longrightarrow}
\cdots\overset{d^{-1}}{\longrightarrow}
\mathscr{L}^{0}_x\underset{\mathbb{Z}}{\otimes}(i_*or_{M/X})_x\longrightarrow0.
\end{eqnarray*}

\item The case of $x\in M$.

Except for the $0$-th and the $(m-1)$-th homology groups, the homology groups of the $(m-1)$-dimensional spherical surface vanish.
Its $0$-th homology group is isomorphic to $\mathbb{C}$,
and the $(m-1)$-th homology gives the orientation of the surface which is isomorphic to $\mathbb{C}\underset{\mathbb{Z}}{\otimes}(or_{M/X})_x$.
Hence we obtain the exact sequence;
\begin{eqnarray*}{\label{ex-sequence}}
0\longrightarrow
\mathscr{L}^{-m}_x\underset{\mathbb{Z}}{\otimes}(i_*or_{M/X})_x\overset{d^{-m}}{\longrightarrow}
\cdots\overset{d^{-1}}{\longrightarrow}
\mathscr{L}^{0}_x\underset{\mathbb{Z}}{\otimes}(i_*or_{M/X})_x\overset{d^{0}}{\longrightarrow}
(\mathbb{C}_M)_x\longrightarrow0.
\end{eqnarray*}

\end{enumerate}
The proof of Proposition \ref{exact} has been completed.
\end{proof}

By Proposition \ref{exact} we have the quasi-isomorphism.
\begin{equation}{\label{quasi}}
\mathscr{L}^\bullet_\chi\underset{\mathbb{Z}_X}{\otimes} i_*or_{M/X} \simeq\mathbb{C}_M.
\end{equation}
Here $\mathscr{L}^\bullet_\chi\underset{\mathbb{Z}_X}{\otimes} i_*or_{M/X}$ designates the complex
\begin{equation}
0\longrightarrow
\mathscr{L}^{-m}_\chi\underset{\mathbb{Z}_X}{\otimes} i_*or_{M/X} \longrightarrow
\mathscr{L}^{-m+1}_\chi\underset{\mathbb{Z}_X}{\otimes} i_*or_{M/X} \longrightarrow
\cdots\longrightarrow
\mathscr{L}^{0}_\chi\underset{\mathbb{Z}_X}{\otimes} i_*or_{M/X} \longrightarrow0.
\end{equation}

Let $U$ be in $\mathcal{U}$.
For convenience we set 
\begin{eqnarray}
\mathscr{F}(\sigma,U)&=&\mathscr{F}(M*{\rm St}(|\sigma|)\cap U)\quad(\sigma\in\Delta(\chi)), \\
\mathscr{F}_k(\chi,U)&=&\underset{\sigma\in\Delta_{k-1}(\chi)}{\oplus}\mathscr{F}(\sigma,U) \quad(k=1,2,\cdots,m).
\end{eqnarray}
Moreover we set
\begin{eqnarray}
\mathscr{F}(\sigma)&=&\ilim[U\in\mathcal{U}]\mathscr{F}(\sigma,U), \\
\mathscr{F}_k(\chi)&=&\ilim[U\in\mathcal{U}]\mathscr{F}_k(\chi,U) \quad(k=1,2,\cdots,m).
\end{eqnarray}
\begin{lem}\label{chi}
We have the isomorphism $\beta_\chi$ as follows.
\begin{equation}\label{lcg}
\begin{split}
\frac{\mathscr{F}_m(\chi)}
{{\rm Im}(\mathscr{F}_{m-1}(\chi)\rightarrow\mathscr{F}_m(\chi))}
\underset{\mathbb{Z}_M(M)}{\otimes}or_{M/X}(M)
{\overset{\sim}{\longrightarrow}}\hmx.
\end{split}
\end{equation}
\end{lem}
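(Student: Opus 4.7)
My plan is to apply ${\rm RHom}_{\mathbb{C}_X}(-,\mathscr{F})$ to the quasi-isomorphism \eqref{quasi} of Proposition \ref{exact}, then use the dualization procedure from the construction of $b_W$ together with the acyclicity condition (U-1) to turn the resulting abstract derived complex into the concrete complex $\mathscr{F}_\bullet(\chi)$.

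First, from the quasi-isomorphism $\mathscr{L}^\bullet_\chi\otimes_{\mathbb{Z}_X} i_*or_{M/X}\simeq\mathbb{C}_M$ one obtains
$$
{\rm H}^m_M(X,\mathscr{F})\underset{\mathbb{Z}_M(M)}{\otimes}or_{M/X}(M)
\simeq
{\rm H}^m\bigl({\rm RHom}_{\mathbb{C}_X}(\mathscr{L}^\bullet_\chi\otimes i_*or_{M/X},\mathscr{F})\bigr)\underset{\mathbb{Z}_M(M)}{\otimes}or_{M/X}(M).
$$
The right-hand side is the $m$-th cohomology of a bounded complex, so it equals a cokernel at the top degree.

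Next, for each fixed $U\in\mathcal{U}$, I mimic the dualization used in Section 3.1: applying $D(\bullet)\otimes\mathbb{C}_U$ termwise and using both $D(\mathbb{C}_{\overline{M*{\rm St}(|\sigma|)}})\otimes\mathbb{C}_U\simeq\mathbb{C}_{M*{\rm St}(|\sigma|)\cap U}$ and Remark \ref{rmk1} (which, via $D(\mathbb{C}_M)\simeq\mathbb{C}_M\otimes or_{M/X}[-m]$, absorbs the orientation factor once we have tensored with $or_{M/X}(M)$), the complex ${\rm RHom}(\mathscr{L}^\bullet_\chi\otimes i_*or_{M/X},\mathscr{F})\otimes or_{M/X}(M)$ transforms into a complex whose degree-$(m-1-k)$ term is
$$
\underset{\sigma\in\Delta_k(\chi)}{\oplus}{\rm R}\Gamma(M*{\rm St}(|\sigma|)\cap U,\mathscr{F}),
$$
with differentials inherited from $d^{k-m+1}_\chi$ together with the signs $<\sigma,\tau>$ and the multipliers $a_\mathds{1}(\sigma)$ of Definition 3.9. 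By (U-1) each ${\rm R}\Gamma(M*{\rm St}(|\sigma|)\cap U,\mathscr{F})$ is concentrated in degree $0$ and equals $\mathscr{F}(\sigma,U)$, so the resulting complex is exactly $\mathscr{F}_\bullet(\chi,U)$ with $\mathscr{F}_m(\chi,U)$ in the top degree; its top cohomology is $\mathscr{F}_m(\chi,U)/{\rm Im}(\mathscr{F}_{m-1}(\chi,U)\to\mathscr{F}_m(\chi,U))$. Taking the filtered inductive limit over $\mathcal{U}$ (directed by (U-2)) and using that filtered colimits commute with cohomology of bounded complexes yields the desired $\beta_\chi$.

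The main obstacle is careful bookkeeping of orientations and signs through the chain of functors ($D$, $\otimes i_*or_{M/X}$, ${\rm RHom}$, $\otimes\mathbb{C}_U$); the tensor with $or_{M/X}(M)$ on both sides is precisely what cancels the orientation twist coming from $D(\mathbb{C}_M)$, and one must verify that the induced differentials of $\mathscr{F}_\bullet(\chi)$ match those predicted by $d^k_\chi$ sign-by-sign. A secondary point is that ${\rm H}^m_M(X,\mathscr{F})$ is independent of $U$ while the concrete complex $\mathscr{F}_\bullet(\chi,U)$ is not; the iso must be obtained only after passing to the limit, which is justified because the underlying resolution \eqref{quasi} is global and (U-1) holds uniformly in $U\in\mathcal{U}$.
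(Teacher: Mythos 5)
Your proposal follows essentially the same route as the paper: dualize the resolution \eqref{quasi} term by term via $D(\bullet)\otimes\mathbb{C}_U$ so that each term becomes $\mathbb{C}_{M*{\rm St}(|\sigma|)\cap U}$, use the acyclicity condition (U-1) to replace ${\rm RHom}(-,\mathscr{F})$ by the naive ${\rm Hom}$ complex, take top cohomology, and pass to the inductive limit over $U\in\mathcal{U}$. This matches the paper's proof (which packages the dualized complex as $\mathscr{N}^\bullet_\chi$), so no further comment is needed.
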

\begin{proof}
By applying $D(\bullet)$ to \eqref{quasi} we obtain
$$
D(\mathscr{L}^\bullet_\chi\underset{\mathbb{Z}_X}{\otimes} i_*or_{M/X})\overset{\sim}{\longleftarrow}D(\mathbb{C}_M).
$$
Because the support of $D(\mathbb{C}_M)$ is contained in $M$ we obtain, for $U\in\mathcal{U}$,
\begin{flalign}{\label{lm}}
D(\mathbb{C}_M)&=D(\mathbb{C}_M)\underset{\mathbb{Z}_X}{\otimes}\mathbb{C}_U. \nonumber \\
&\overset{\sim}{\longrightarrow}D(\mathscr{L}^\bullet_\chi\underset{\mathbb{Z}_X}{\otimes} i_*or_{M/X})\underset{\mathbb{Z}_X}{\otimes}\mathbb{C}_U.
\end{flalign}
Moreover, since each $M*{\rm St}(|\sigma|)$ is cohomologically trivial we get
$$
D(\mathscr{L}^\bullet_\chi\underset{\mathbb{Z}_X}{\otimes} i_*or_{M/X})\underset{\mathbb{Z}_X}{\otimes}\mathbb{C}_U\simeq \mathscr{N}^\bullet_\chi\underset{\mathbb{Z}_X}{\otimes}i_*or_{M/X}.
$$
Here the $\mathscr{N}^\bullet_\chi$ designates the complex
$$
0\longrightarrow
\mathscr{N}^{0}_\chi\longrightarrow
\mathscr{N}^{1}_\chi\longrightarrow
\cdots\longrightarrow
\mathscr{N}^{m-1}_\chi\longrightarrow0,
$$
where
$$
\mathscr{N}^{k}_\chi=\underset{\sigma\in\Delta_{m-k-1}(\chi)}{\oplus}\mathbb{C}_{{\sigma},U},
$$
$$
\mathbb{C}_{\sigma,U}=\mathbb{C}_{M*{\rm St}(|\sigma|)\cap U},
$$
for $k=0,1,\cdots,m-2$ and $\mathscr{N}^{m-1}_\chi=\mathbb{C}_U$.
By applying ${\rm RHom}_{\mathbb{C}_X}(\bullet,\mathscr{F})$ to \eqref{lm} we obtain
\begin{equation}{\label{homM}}
{\rm RHom}_{\mathbb{C}_X}(\mathscr{N}^\bullet_\chi\underset{\mathbb{Z}_X}{\otimes}i_*or_{M/X},\mathscr{F})\overset{\sim}{\longrightarrow}
{\rm RHom}_{\mathbb{C}_X}(D(\mathbb{C}_M),\mathscr{F}).
\end{equation}
Since $U$ satisfies the condition (U-1), we obtain
$$
{\rm RHom}_{\mathbb{C}_X}(\mathscr{N}^\bullet_\chi\underset{\mathbb{Z}_X}{\otimes}i_*or_{M/X},\mathscr{F})=
{\rm Hom}_{\mathbb{C}_X}(\mathscr{N}^\bullet_\chi\underset{\mathbb{Z}_X}{\otimes}i_*or_{M/X},\mathscr{F}).
$$
By taking the $0$-th cohomology, we get
$$
\frac{\mathscr{F}_m(\chi,U)}
{{\rm Im}(\mathscr{F}_{m-1}(\chi,U)\rightarrow\mathscr{F}_m(\chi,U))}
\underset{\mathbb{Z}_M(M)}{\otimes}or_{M/X}(M)
\overset{\sim}{\longrightarrow} \hmx.
$$
Finally we take the inductive limit in the above isomorphism with respect to $U\in\mathcal{U}$, and then, the claim of the lemma follows.
\end{proof}
Hereafter we identify the objects in both sides of \eqref{lcg} by $\beta_\chi$.
For any $\sigma\in\Delta_{m-1}(\chi)$ the following diagram commutes.
\begin{eqnarray}\label{commuteL}
\begin{CD}
\mathscr{L}^{0}_\chi\underset{\mathbb{Z}_X}{\otimes} i_*or_{M/X} @>>> \mathbb{C}_M\\
@A\varepsilon_{\chi,\sigma} AA @AAA \\
\mathbb{C}_{\overline{\sigma}}\underset{\mathbb{Z}_X}{\otimes} i_*or_{M/X} @>\sim>\eta_{\chi,\sigma}> \mathbb{C}_{\overline{M*{\rm St}(|\sigma|)}}.
\end{CD}
\end{eqnarray}
Here $\varepsilon_{\chi,\sigma}$ is the embedding map and $\eta_{\chi,\sigma}$ is defined by
\begin{eqnarray}
c_\sigma{\otimes}\,\mathds{1}\mapsto a_\mathds{1}(\sigma)\cdot c_\sigma.
\end{eqnarray}
Note that $\eta^{-1}_{\chi,\sigma}$ is given by
\begin{equation}{\label{eta}}
c \mapsto a_\mathds{1}(\sigma)c \otimes\mathds{1},
\end{equation}
where $c\in\mathbb{C}_{\overline{M*{\rm St}(|\sigma|)}}$.
By \eqref{commuteL} the diagram of complexes below commutes.
\begin{eqnarray*}
\begin{CD}
\mathscr{L}^\bullet_\chi\underset{\mathbb{Z}_X}{\otimes} i_*or_{M/X} @>\sim>> \mathbb{C}_M\\
@A\varepsilon_{\chi,\sigma} AA @AAA \\
\mathbb{C}_{\overline{\sigma}}\underset{\mathbb{Z}_X}{\otimes} i_*or_{M/X} @>\sim>\eta_{\chi,\sigma}> \mathbb{C}_{\overline{M*{\rm St}(|\sigma|)}}.
\end{CD}
\end{eqnarray*}
By applying ${\rm RHom}_{\mathbb{C}_X}(D(\bullet)\otimes\mathbb{C}_U,\mathscr{F})$ to the above commutative diagram, taking the $0$-th cohomology groups and taking the inductive limit $\ilim[U\in\mathcal{U}]$ of these cohomology groups, we obtain the commutative diagram.
\begin{eqnarray}\label{comdia3}
\begin{CD}
\dfrac{\mathscr{F}_m(\chi)}
{{\rm Im}(\mathscr{F}_{m-1}(\chi)\rightarrow\mathscr{F}_m(\chi))}
\underset{\mathbb{Z}_M(M)}{\otimes}or_{M/X}(M) @>\sim>\beta_{\chi}> \hmx \\
@A\varepsilon^0_{\chi,\sigma} AA @AAb_{W_{|\sigma|}} A \\
\mathscr{F}(\sigma)\underset{\mathbb{Z}_M(M)}{\otimes}or_{M/X}(M) @>\sim>\eta^0_{\chi,\sigma}> \ilim[U\in\mathcal{U}]\mathscr{F}(M*{\rm St}(|\sigma|)\cap U).
\end{CD}
\end{eqnarray}
Here vertical map on the right side is $b_{W_{|\sigma|}}=\ilim[U\in\mathcal{U}] b_{W_{|\sigma|,U}}$ with $W_{|\sigma|,U}=M*{\rm St}_\chi(|\sigma|) \cap U$ and
$$
\varepsilon^0_{\chi,\sigma}=\ilim[U\in\mathcal{U}]{\rm H}^0({\rm RHom}_{\mathbb{C}_X}(D(\varepsilon_{\chi,\sigma})\otimes\mathbb{C}_U,\mathscr{F})).
$$
Moreover the upper horizontal map is $\beta_\chi$ and
$$
\eta^0_{\chi,\sigma}=\ilim[U\in\mathcal{U}]{\rm H}^0({\rm RHom}_{\mathbb{C}_X}(D(\eta_{\chi,\sigma})\otimes\mathbb{C}_U,\mathscr{F})).
$$
By taking a direct sum with respect to $\sigma\in\Delta_{m-1}(\chi)$, we obtain
\begin{eqnarray}{\label{main com}}
\begin{CD}
\dfrac{\mathscr{F}_m(\chi)}
{{\rm Im}(\mathscr{F}_{m-1}(\chi)\rightarrow\mathscr{F}_m(\chi))}
\underset{\mathbb{Z}_M(M)}{\otimes}or_{M/X}(M) @>\sim>\beta_{\chi}> \hmx \\
@A\varepsilon^0_{\chi} AA @AA\underset{|\sigma|\in|\Delta|_{m-1}(\chi)}{\oplus}b_{W_{|\sigma|}} A \\
\underset{\sigma\in\Delta_{m-1}(\chi)}{\oplus}\left(\mathscr{F}(\sigma)\underset{\mathbb{Z}_M(M)}{\otimes}or_{M/X}(M)\right) @>\sim>\eta^0_{\chi}> \underset{|\sigma|\in|\Delta|_{m-1}(\chi)}{\oplus}\left(\ilim[U\in\mathcal{U}]\mathscr{F}(M*{\rm St}(|\sigma|)\cap U)\right),
\end{CD}
\end{eqnarray}
where $\eta^0_{\chi}=\underset{\sigma\in\Delta_{m-1}(\chi)}{\oplus}\eta^0_{\chi,\sigma}$ and $\varepsilon^0_{\chi}=\underset{\sigma\in\Delta_{m-1}(\chi)}{\oplus}\varepsilon^0_{\chi,\sigma}$.
We note that $\varepsilon^0_{\chi}$ is surjective.

On account of \eqref{eta},
tracing the map $(\eta^0_\chi)^{-1}$ and $\varepsilon^0_{\chi}$ in \eqref{comdia3} we get the proposition below.
\begin{prop}\label{main1}
The boundary value morphism $b_\mathcal{W}$
\begin{align}
\begin{aligned}
\hat{\rm H}(\mathscr{F}(\mathcal{W}))
\longrightarrow
\hmx
\end{aligned}
\end{align}
is given by, for $W \in \mathcal{W}$ and $f \in \mathscr{F}(W)$
\begin{equation}
f\mapsto f a_\mathds{1}(\sigma) \otimes\mathds{1}
\in 
\mathscr{F}(\sigma,U)
\otimes or_{M/X}(M)
\subset
\mathscr{F}_m(\chi) \otimes or_{M/X}(M),
\end{equation}
through the identification by $\beta_\chi$.
Here $\mathds{1}$ is a section of $or_{M/X}(M)$ which generates $or_{M/X}$ over $\mathbb{Z}_M$, $\sigma \in \Delta_{m-1}(\chi)$ and 
$U \in \mathcal{U}$ are taken such that $M*{\rm St}_\chi(|\sigma|) \cap U \subset W$ holds.

\end{prop}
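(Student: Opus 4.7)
The plan is to read off the formula for $b_\mathcal{W}$ directly from the commutative diagram \eqref{main com}, combined with the explicit description \eqref{eta} of $\eta^{-1}_{\chi,\sigma}$. All the substantive work has been absorbed into the preceding lemmas: the diagram \eqref{comdia3} is known to commute, the isomorphism $\beta_\chi$ is the fixed identification throughout, and $\eta^0_{\chi,\sigma}$ has been given a concrete formula at the level of sections.

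First I would fix $W \in \mathcal{W}$ and $f \in \mathscr{F}(W)$, and invoke (W-2) to choose $\chi \in \mathcal{T}$, $\sigma \in \Delta_{m-1}(\chi)$ and $U \in \mathcal{U}$ with $M*{\rm St}_\chi(|\sigma|) \cap U \subset W$. Writing $W_{|\sigma|,U} = M*{\rm St}_\chi(|\sigma|) \cap U$, the restriction $f|_{W_{|\sigma|,U}}$ lies in $\mathscr{F}(W_{|\sigma|,U})$ and hence defines an element of $\ilim[U\in\mathcal{U}]\mathscr{F}(M*{\rm St}(|\sigma|)\cap U)$, which is the lower right corner of \eqref{comdia3}. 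By Corollary \ref{cor1}, $b_W(f) = b_{W_{|\sigma|}}(f|_{W_{|\sigma|,U}})$, so computing $b_\mathcal{W}(f)$ reduces to pushing this class along the right vertical arrow and then reading the result off against $\beta_\chi$.

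Next I would pull $f|_{W_{|\sigma|,U}}$ back through $(\eta^0_{\chi,\sigma})^{-1}$; applying \eqref{eta} pointwise to this restriction, the preimage is exactly $a_\mathds{1}(\sigma) f \otimes \mathds{1}$ as an element of $\mathscr{F}(\sigma)\underset{\mathbb{Z}_M(M)}{\otimes} or_{M/X}(M)$. Composing with $\varepsilon^0_{\chi,\sigma}$ embeds this into the quotient appearing on the upper left of \eqref{comdia3}. The commutativity of \eqref{comdia3} then identifies $\beta_\chi$ of the resulting class with $b_W(f) = b_\mathcal{W}(f)$, which is precisely the formula in the statement.

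I do not expect a serious obstacle; the proposition is essentially a dictionary entry unpacking the diagram \eqref{main com}. The only point deserving a line of comment is that, although the displayed formula depends apparently on the choice of $(\chi,\sigma,U)$, its class in the quotient $\mathscr{F}_m(\chi)/{\rm Im}(\mathscr{F}_{m-1}(\chi) \to \mathscr{F}_m(\chi))\otimes or_{M/X}(M)$ is forced to be independent of these choices by Proposition \ref{indep} together with Corollary \ref{cor1}; alternatively, any two admissible choices can be linked via the chain of $(m-1)$-cells supplied by (W-3), and the discrepancies are absorbed by the relation ${\rm Im}(\mathscr{F}_{m-1}(\chi) \to \mathscr{F}_m(\chi))$.
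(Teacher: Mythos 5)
Your proposal is correct and follows exactly the paper's argument: the paper proves this proposition in one line by "tracing the map $(\eta^0_\chi)^{-1}$ and $\varepsilon^0_{\chi}$ in \eqref{comdia3}" using the explicit formula \eqref{eta}, which is precisely what you do (with the reduction via Corollary \ref{cor1} and the well-definedness remark spelled out slightly more explicitly).
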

Note that $a_{\mathds{1}}(\sigma)\otimes\mathds{1}$ does not depend on the choices of the generator $\mathds{1}$ because another choice of the generator is, if $M$ is connected, $-\mathds{1}$, and hence, we get on each connected component
$$
a_{-\mathds{1}}(\sigma)\otimes-\mathds{1}=a_{\mathds{1}}(\sigma)\otimes\mathds{1}.
$$
\subsection{The inverse map $\rho$ of $b_\mathcal{W}$}

The aim of this subsection is to construct the inverse map $\rho$ of $b_\mathcal{W}$ and complete the proof for Theorem \ref{mainthm}.

Firstly we prove the commutativity of diagram [1],[2] and [3].
\begin{equation}\label{fivecom}
\xymatrix@C=0pt{
\dfrac{\mathscr{F}_m(\chi)}
{{\rm Im}(\mathscr{F}_{m-1}(\chi)\rightarrow\mathscr{F}_m(\chi))}
\underset{\mathbb{Z}_M(M)}{\otimes}or_{M/X}(M) \ar[rr]^{\beta_\chi} \ar[dr]^{\rho_\chi} & & \hmx \\
& \hat{\rm H}(\mathcal{W}(\mathscr{F})) \ar[ur]^{b_\mathcal{W}} \ar@{}[ld]|{[2]} \ar@{}[u]|{[3]} \ar@{}[r]|{[1]} & \\
\underset{\sigma\in\Delta_{m-1}(\chi)}{\oplus}\left(\mathscr{F}(\sigma)\underset{\mathbb{Z}_M(M)}{\otimes}or_{M/X}(M)\right) \ar@{->>}[uu]^{\varepsilon^0_\chi} \ar[rr]_{\eta^0_\chi} & & \underset{|\sigma|\in|\Delta|_{m-1}(\chi)}{\oplus}\left(\ilim[U\in\mathcal{U}]\mathscr{F}(M*{\rm St}(|\sigma|)\cap U)\right) \ar[ul]_\varpi \ar[uu]_{\underset{|\sigma|\in|\Delta|_{m-1}(\chi)}{\oplus}b_{W_{|\sigma|}}}.
}
\end{equation}
Here $\varpi$ is a canonical morphism
and $\rho_\chi$ is defined as follows.
\begin{dfn}{\label{rho}}
The map $\rho_\chi$
\begin{align}
\begin{aligned}
\rho_\chi:
\dfrac{\mathscr{F}_m(\chi)}
{{\rm Im}(\mathscr{F}_{m-1}(\chi)\rightarrow\mathscr{F}_m(\chi))}
\underset{\mathbb{Z}_M(M)}{\otimes}or_{M/X}(M)
\longrightarrow
\hat{\rm H}(\mathscr{F}(\mathcal{W}))
\end{aligned}
\end{align}
is defined by, for $f_\sigma \in \mathscr{F}(\sigma)$,
$$
f_\sigma\underset{\mathbb{Z}}{\otimes}\mathds{1} \mapsto a_\mathds{1}(\sigma)\cdot f_\sigma,
$$
where $\mathds{1}$ is a section of $or_{M/X}(M)$ which generates $or_{M/X}$ over $\mathbb{Z}_M$.
\end{dfn}
We note that $\rho_\chi$ is well-defined.
As a matter of fact, the set ${{\rm Im}(\mathscr{F}_{m-1}(\chi)\rightarrow\mathscr{F}_m(\chi))}$ is generated by elements
$$
<\tau,\sigma^+> f|_{\sigma^+} \underset{\mathbb{Z}}{\otimes}\mathds{1} \,+ <\tau,\sigma^-> f|_{\sigma^-} \underset{\mathbb{Z}}{\otimes}\mathds{1},
$$
for $f\in\mathscr{F}_{m-2}(\tau)$, $\tau\in\Delta_{m-2}(\chi)$ and $\sigma^*\in\Delta_{m-1}(\chi)$ with $\tau\subset\overline{\sigma^*}$.
Here $*=+$ or $-$ and $\sigma^+ \neq \sigma^-$.
Hence we obtain
\begin{flalign*}
&\rho_\chi(<\tau,\sigma^+> f|_{\sigma^+} \underset{\mathbb{Z}}{\otimes}\mathds{1} \,+ <\tau,\sigma^-> f|_{\sigma^-} \underset{\mathbb{Z}}{\otimes}\mathds{1}) \\
=&(a_\mathds{1}(\sigma^+) <\tau,\sigma^+> +\,a_\mathds{1}(\sigma^-) <\tau,\sigma^-> )f =0.
\end{flalign*}

The commutativity of [1] follows from Corollary \ref{cor1}.
And the diagram [2] also commutes by Definition \ref{rho} and the definition of $\varpi$.
We prove the commutativity of the diagram [3].
Since $\varepsilon^0_{\chi}$ is surjective, it suffice to prove that
$$
\beta_\chi \circ \varepsilon^0_{\chi}
=b_\mathcal{W} \circ \rho_\chi \circ \varepsilon^0_{\chi},
$$
and this commutativity follows from \eqref{main com} and the commutativity of [1] and [2].
Then we get the following
\begin{thm}{\label{rho1}}
The map
$$
\rho_\chi\circ\beta^{-1}_\chi:
\hmx
\longrightarrow
\hat{\rm H}(\mathscr{F}(\mathcal{W}))
$$
does not depend on the choice of $\chi$.
Moreover the map $\rho=\rho_\chi\circ\beta^{-1}_\chi$ is the inverse map of $b_\mathcal{W}$.
\end{thm}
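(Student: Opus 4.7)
The plan is to avoid reasoning directly about how $\rho_\chi\circ\beta_\chi^{-1}$ changes with $\chi$ and instead verify that it is a two-sided inverse of $b_\mathcal{W}$ for every fixed $\chi\in\tf$; independence of $\chi$ will then follow at once from the uniqueness of the inverse of an isomorphism.

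For the right-inverse identity $b_\mathcal{W}\circ\rho_\chi\circ\beta_\chi^{-1}=\mathrm{id}$ on $\hmx$, nothing new has to be shown: the commutativity of triangle [3] in \eqref{fivecom} which we have just established reads $\beta_\chi=b_\mathcal{W}\circ\rho_\chi$, so composing with $\beta_\chi^{-1}$ on the right gives the claim.

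The substance of the argument lies in the left-inverse identity $\rho_\chi\circ\beta_\chi^{-1}\circ b_\mathcal{W}=\mathrm{id}$ on $\hat{\rm H}(\mathscr{F}(\mathcal{W}))$. To prove this, I would take any class represented by $f\in\mathscr{F}(W)$ with $W\in\mathcal{W}$ and chase it through the three maps. By condition (W-2) one obtains $U\in\mathcal{U}$, $\chi\in\tf$, and $\sigma\in\Delta(\chi)$ with $M*{\rm St}_\chi(|\sigma|)\cap U\subset W$; replacing $\sigma$ by any $\sigma'\in\Delta_{m-1}(\chi)$ with $\sigma\subset\overline{\sigma'}$ uses the inclusion ${\rm St}(|\sigma'|)\subset{\rm St}(|\sigma|)$ (coming from transitivity of closure in a stratification) to let us assume $\sigma\in\Delta_{m-1}(\chi)$ outright. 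Proposition \ref{main1} then reads
\[
b_\mathcal{W}(f)=\beta_\chi\bigl(f|_{M*{\rm St}(|\sigma|)\cap U}\cdot a_\mathds{1}(\sigma)\otimes\mathds{1}\bigr),
\]
so applying $\beta_\chi^{-1}$ followed by $\rho_\chi$ as in Definition \ref{rho} produces $a_\mathds{1}(\sigma)^2\cdot f|_{M*{\rm St}(|\sigma|)\cap U}$. Since $a_\mathds{1}(\sigma)$ is a locally constant $\pm 1$-valued function on $M$, its square is $1$; the output is $f|_{M*{\rm St}(|\sigma|)\cap U}$, which represents the same class as $f$ modulo $\mathcal{R}$ and hence equals the starting class in $\hat{\rm H}(\mathscr{F}(\mathcal{W}))$.

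The main obstacle I anticipate is clerical rather than conceptual: the tensor slot holding $\mathds{1}$, the sign function $a_\mathds{1}(\sigma)$, and the reduction to a maximal cell $\sigma\in\Delta_{m-1}(\chi)$ all need to be threaded carefully so that Proposition \ref{main1} applies and the two copies of $a_\mathds{1}(\sigma)$ collapse. Once that chase is carried out, $\rho_\chi\circ\beta_\chi^{-1}$ is the unique two-sided inverse of $b_\mathcal{W}$, and both assertions of Theorem \ref{rho1} follow.
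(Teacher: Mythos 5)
Your proposal inverts the logical order of the paper's argument, and in doing so it opens a genuine gap. The paper must prove the independence of $\rho_\chi\circ\beta_\chi^{-1}$ on $\chi$ \emph{first} (via the chain map $\Theta^\bullet$ of Proposition \ref{finer ex}, which rests on Lemmas \ref{st}, \ref{UV} and \ref{exlem}); only then is it legitimate to write ``we may fix $\chi$'' in the computation of $\rho\circ b_\mathcal{W}$, because ``fix'' there really means ``choose $\chi$ adapted to the given $W$.'' You propose instead to verify the two-sided inverse identity for \emph{every} fixed $\chi$ and deduce independence from uniqueness of inverses. The right-inverse identity indeed holds for each $\chi$ by the commutativity of [3] in \eqref{fivecom}. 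But your left-inverse chase breaks down: for a fixed $\chi$ and an arbitrary $W\in\mathcal{W}$ with $f\in\mathscr{F}(W)$, Proposition \ref{main1} requires $\sigma\in\Delta_{m-1}(\chi)$ and $U\in\mathcal{U}$ with $M*{\rm St}_\chi(|\sigma|)\cap U\subset W$ \emph{for that same $\chi$}, whereas condition (W-2) only guarantees the existence of \emph{some} $\chi'\in\mathcal{T}$ (depending on $W$) with this property. In the basic example where $\mathcal{W}$ consists of all infinitesimal wedges over connected open cones $\Gamma$, a cone $\Gamma$ thinner than every open $(m-1)$-cell of the fixed $\chi$ contains no set $M*{\rm St}_\chi(|\sigma|)\cap U$, so the chase cannot even start. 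Equivalently, the surjectivity of $\rho_\chi$ for a fixed $\chi$ is exactly the nontrivial point, and $b_\mathcal{W}\circ\rho_\chi=\beta_\chi$ alone only shows $\rho_\chi\circ\beta_\chi^{-1}$ is a section of $b_\mathcal{W}$, not a two-sided inverse.

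The remaining details of your chase are fine and match the paper: the reduction to an $(m-1)$-cell via ${\rm St}(|\sigma'|)\subset{\rm St}(|\sigma|)$, the application of Proposition \ref{main1} and Definition \ref{rho}, and the cancellation $a_\mathds{1}(\sigma)^2=1$. But the first assertion of the theorem --- independence of $\chi$ --- is the substantive content here and cannot be recovered as a corollary of uniqueness of inverses; you need to confront the comparison of the two resolutions $\mathscr{L}^\bullet_\chi$ and $\mathscr{L}^\bullet_{\chi'}$ for $\chi\prec\chi'$ directly, as the paper does with Proposition \ref{finer ex} and the computation $\rho_{\chi'}\circ A^0(\Theta)=\rho_\chi$.
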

In the sequel, we shall prove Theorem \ref{rho1}.
To see the first claim in Theorem \ref{rho1}, as $\mathcal{T}$ satisfies the condition (T-2), it suffices to show the claim only for the pair $\chi\prec\chi'$ in $\mathcal{T}$.
We construct the map
$$
\Theta^0:\mathscr{L}^{0}_{\chi}\underset{\mathbb{Z}_X}{\otimes} i_*or_{M/X}\longrightarrow\mathscr{L}^{0}_{\chi'}\underset{\mathbb{Z}_X}{\otimes} i_*or_{M/X}
$$
as follows.
Let us take a choice function $\psi$
$$
\psi:\Delta_{m-1}(\chi)\longrightarrow\Delta_{m-1}(\chi')
$$
such that $|\psi(\sigma)|\subset|\sigma|$ for any $\sigma\in\Delta_{m-1}(\chi)$.
Then we define the $\Theta^0$ by
\begin{equation*}
\Theta^0:c_\sigma\otimes\mathds{1}\mapsto <\sigma,\psi(\sigma)>c_\sigma|_{\psi(\sigma)}\otimes\mathds{1},
\end{equation*}
where $c_\sigma\in\mathbb{C}_{\overline{\sigma}}$ and $\mathds{1}\in i_*or_{M/X}$.
We have the following commutative diagram.
\begin{eqnarray*}
\begin{CD}
\mathscr{L}^{0}_{\chi}\underset{\mathbb{Z}_X}{\otimes} i_*or_{M/X} @>d^{0}_\chi>> \mathbb{C}_M\\
@V\Theta^0VV @VVidV \\
\mathscr{L}^{0}_{\chi'}\underset{\mathbb{Z}_X}{\otimes} i_*or_{M/X} @>>d^{0}_{\chi'}> \mathbb{C}_M.
\end{CD}
\end{eqnarray*}
As a matter of fact the commutativity of diagram above follows from
\begin{flalign*}
&(d^0\circ\Theta^0)(c_\sigma\otimes\mathds{1})
=\,d^0(<\sigma,\psi(\sigma)>c_\sigma|_{\psi(\sigma)}\otimes\mathds{1}) \\
=\,&c_\sigma|_{\psi(\sigma)}<\sigma,\psi(\sigma)>a_\mathds{1}(\psi(\sigma))
=\,c_\sigma a_\mathds{1}(\sigma)
=\,d^0(c_\sigma\otimes\mathds{1}),
\end{flalign*}
where $\sigma\in\Delta_{m-1}(\chi)$.

We can extend this commutative diagram to that of the complexes $\mathscr{L}^\bullet_\chi$ and $\mathscr{L}^\bullet_{\chi'}$ by the following proposition.
\begin{prop}\label{finer ex}
Let $\Theta^0$ be the map defined above.
There exist $\Theta^{k-m}(k=0,1,\cdots,m-1)$ such that the following diagram commutes.
\begin{eqnarray*}
\begin{CD}
\mathscr{L}^{-m}_\chi\underset{\mathbb{Z}_X}{\otimes} i_*or_{M/X} @>d^{-m}_\chi>>\cdots @>d^{-1}_\chi>> \mathscr{L}^{0}_\chi\underset{\mathbb{Z}_X}{\otimes} i_*or_{M/X} @>d^{0}_\chi>> \mathbb{C}_M @>>>0\\
@VV{\Theta^{-m}}V @. @VV{\Theta^{0}}V @VV{id}V  \\
\mathscr{L}^{-m}_{\chi'}\underset{\mathbb{Z}_X}{\otimes} i_*or_{M/X} @>d^{-m}_{\chi'}>>\cdots @>d^{-1}_{\chi'}>> \mathscr{L}^{0}_{\chi'}\underset{\mathbb{Z}_X}{\otimes} i_*or_{M/X} @>d^{0}_{\chi'}>> \mathbb{C}_M @>>>0.
\end{CD}
\end{eqnarray*}
\end{prop}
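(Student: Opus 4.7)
I would prove the proposition by downward induction on $k$, building $\Theta^{k-m}$ out of $\Theta^{k-m+1}$ and terminating in the already-given $\Theta^0$. The principal input is the exactness of the lower row (Proposition \ref{exact}), which allows one to lift cocycles along $d^{k-m}_{\chi'}$; the commutativity of the rightmost square has also been verified explicitly just before the proposition, so that serves as the true base case.

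For the inductive step, assume $\Theta^{k-m+1}$ has been constructed so that the square immediately to the right commutes. Form the composite
\[
\Theta^{k-m+1}\circ d^{k-m}_\chi:\ \mathscr{L}^{k-m}_\chi\underset{\mathbb{Z}_X}{\otimes}i_*or_{M/X}\longrightarrow\mathscr{L}^{k-m+1}_{\chi'}\underset{\mathbb{Z}_X}{\otimes}i_*or_{M/X}.
\]
Applying $d^{k-m+1}_{\chi'}$ to it and using the inductive commutativity together with $d^{k-m+1}_\chi\circ d^{k-m}_\chi=0$, the image lies in $\ker d^{k-m+1}_{\chi'}$. By Proposition \ref{exact}, at every stalk this kernel coincides with $\mathrm{Im}\,d^{k-m}_{\chi'}$, yielding stalk-wise lifts.

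To upgrade these stalk-wise lifts to an honest morphism of sheaves, I would exploit the decomposition $\mathscr{L}^{k-m}_\chi=\bigoplus_{\sigma\in\Delta_k(\chi)}\mathbb{C}_{\overline{\sigma}}$ and define $\Theta^{k-m}$ summand by summand. For each $\sigma\in\Delta_k(\chi)$ the refinement $\chi'$ subdivides $|\sigma|$ into cells $\sigma'\in\Delta_k(\chi')$ with $|\sigma'|\subset|\sigma|$, and I would set
\[
\Theta^{k-m}(c_\sigma\otimes\mathds{1})=\sum_{\sigma'\in\Delta_k(\chi'),\,|\sigma'|\subset|\sigma|}\epsilon_{\sigma,\sigma'}\,c_\sigma|_{\sigma'}\otimes\mathds{1},
\]
where $\epsilon_{\sigma,\sigma'}\in\{\pm 1\}$ compares the orientation induced on $\sigma'$ from $\sigma$ with the chosen orientation of $\sigma'$. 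Since $|\sigma'|\subset|\sigma|$ forces $\overline{M*{\rm St}(|\sigma'|)}\subset\overline{M*{\rm St}(|\sigma|)}$, each restriction $c_\sigma\mapsto c_\sigma|_{\sigma'}$ is a genuine sheaf morphism, so the formula defines $\Theta^{k-m}$ globally.

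The main obstacle is verifying, not merely stalk-wise but as a morphism of sheaves, that with this choice of signs each square in fact commutes. This reduces to a combinatorial identity of the form
\[
\sum_{\tau\supset\sigma}\langle\sigma,\tau\rangle\,\epsilon_{\tau,\tau'}=\sum_{\sigma'\subset\overline{\tau'},\,|\sigma'|\subset|\sigma|}\epsilon_{\sigma,\sigma'}\,\langle\sigma',\tau'\rangle
\]
for every pair $(\sigma,\tau')$ with $\sigma\in\Delta_k(\chi)$ and $\tau'\in\Delta_{k+1}(\chi')$, expressing the compatibility of the refinement $\chi\prec\chi'$ with the simplicial boundary pairing $\langle\cdot,\cdot\rangle$. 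Once this sign identity is checked by a direct analysis of how each $(k+1)$-cell of $\chi$ is partitioned by cells of $\chi'$, the induction closes and the entire diagram commutes, completing the construction.
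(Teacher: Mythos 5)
Your first paragraph starts down the right road, but the proposal then abandons the paper's actual mechanism and substitutes an explicit formula that does not work. The paper's proof is non-constructive: having observed that $\Theta^{k+1}\circ d^{k}_\chi$ lands in $\ker d^{k+1}_{\chi'}=\mathrm{Im}\,d^{k}_{\chi'}$, it lifts this map through $d^{k}_{\chi'}$ \emph{as a morphism of sheaves} by showing that the obstruction group ${\rm Ext}^1(\mathscr{L}^{k}_\chi\otimes i_*or_{M/X},\,\ker d^{k}_{\chi'})$ vanishes. That vanishing is the real content; it is reduced (after assuming $M$ contractible) to ${\rm Ext}^\ell(\mathbb{C}_{\overline{\sigma}},\mathbb{C}_{\overline{\tau}})=0$ for $\ell\neq 0$, which rests on the geometric Lemmas on star open sets and on the contractibility of $M*{\rm St}_{\chi'}(|\tau|)\setminus M*{\rm St}_{\chi}(|\sigma|)$ — and this is exactly where the hypothesis $\chi\prec\chi'$ enters. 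You correctly note that stalk-wise lifts do not automatically glue to a sheaf morphism, but the ${\rm Ext}^1$-vanishing is the tool that fixes this, and your proposal never establishes it.

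The explicit subdivision formula you offer in its place is false, so the "combinatorial identity" you defer to is not checkable: it fails. Concretely, take $m=3$, let $\chi$ be given by the planes $\{x=0\}$ and $\{y=0\}$ and $\chi'$ by adding $\{x=y\}$. Put $\sigma=(0,0,1)\in\Delta_0(\chi)$ and let $\tau'=\{x=y>0\}\cap S^2\in\Delta_1(\chi')$, a new $1$-cell lying inside the $2$-cell $\delta=\{x>0,y>0\}\cap S^2$ of $\chi$ but whose closure contains the pole $\sigma$. Then $d^{-1}_{\chi'}\circ\Theta^{-2}(c_\sigma\otimes\mathds{1})$ has a nonzero component in $\mathbb{C}_{\overline{\tau'}}$ (coming from $\sigma'=\sigma\subset\overline{\tau'}$), whereas $\Theta^{-1}\circ d^{-1}_\chi(c_\sigma\otimes\mathds{1})$ has none, since $\tau'$ is contained in no $1$-cell of $\chi$; in your identity the left-hand side is $\pm 1$ while the right-hand side is $0$. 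The coefficients $\epsilon_{\sigma,\sigma'}\in\{\pm1\}$ cannot repair a term that is simply absent from the other side. A further mismatch: the $\Theta^0$ you are asked to extend is $c_\sigma\otimes\mathds{1}\mapsto\langle\sigma,\psi(\sigma)\rangle c_\sigma|_{\psi(\sigma)}\otimes\mathds{1}$ for a single chosen subcell $\psi(\sigma)$, not the sum over all subcells, so your uniform formula would not even match the prescribed top-degree map. To close the argument you must return to the homological route: prove ${\rm Ext}^1(\mathbb{C}_{\overline{\sigma}},\ker d^{k}_{\chi'})=0$ by dimension shifting along the truncated complex, using ${\rm Ext}^\ell(\mathbb{C}_{\overline{\sigma}},\mathbb{C}_{\overline{\tau}})=0$ for $\ell\neq0$.
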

Since $X = M \times D^m$, it suffices to show Proposition \ref{finer ex} when $M$ is a point.
Hence, in what follows, we may assume that $M$ is contractible until the end of the proof of the proposition.

To show the proposition, we prepare for several lemmas.
Let $H_\chi$ be the family $\{H_i\}_{i\in\Lambda_\chi}$ of open half spaces $H_i$ in $\mathbb{R}^m$ passing through the origin which defines the stratification $\chi$, that is, $H_\chi$ is the minimal set satisfying that, for any $\sigma\in\Delta_{m-1}(\chi)$, $\sigma$ is given by the intersection of some $H_i$'s belonging to $H_\chi$ and $S^{m-1}$.

The most important property of the family $H_\chi$ is that;
for any $\sigma\in\Delta(\chi)$ and for any $H\in H_\chi$, $\sigma\cap H\neq\emptyset$ implies $\sigma\subset H$, and also $\sigma\cap\partial H\neq\emptyset$ implies $\sigma\subset\partial H$.
In subsequent arguments, this fact is constantly used.
\begin{lem}{\label{st}}
For any $\chi\in\mathcal{T}$ and $\sigma\in\Delta(\chi)$, the star open set ${\rm St}_\chi(|\sigma|)$ has the following expression.
\begin{equation}
{\rm St}_\chi(|\sigma|)=\left(\underset{\sigma\subset H_i\in H_\chi}{\cap}H_i\right)\cap S^{m-1}.
\end{equation}
\end{lem}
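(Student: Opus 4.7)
The plan is to prove both inclusions of the set equality by translating the star-open condition into a condition on the sign pattern of a point relative to each hyperplane defining $\chi$, exploiting the key property highlighted just before the lemma: every cell $\tau\in\Delta(\chi)$ lies entirely in one of $H$, $H^{\mathrm{op}}$, or $\partial H$ for each $H\in H_\chi$, where $H^{\mathrm{op}}$ denotes the opposite open half-space. As a preliminary observation I would verify that $H_\chi$ is closed under the involution $H\mapsto H^{\mathrm{op}}$: any two top-dimensional cells adjacent across a bounding hyperplane lie on opposite sides of it, so the minimality-with-coverage clause of the definition of $H_\chi$ forces both half-spaces to belong to $H_\chi$.

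For the inclusion ${\rm St}_\chi(|\sigma|)\subseteq\left(\bigcap_{\sigma\subset H_i\in H_\chi}H_i\right)\cap S^{m-1}$, I would take $x\in|\tau|$ with $\sigma\subset\overline{\tau}$, and for any $H_i\in H_\chi$ with $\sigma\subset H_i$ rule out the alternatives $\tau\subset H_i^{\mathrm{op}}$ and $\tau\subset\partial H_i$: both would place $\sigma\subset\overline{\tau}$ inside $\overline{H_i^{\mathrm{op}}}$ or $\partial H_i$, contradicting $\sigma\subset H_i$. Hence $\tau\subset H_i$ and $x\in H_i$.

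For the reverse inclusion I would take $x$ in the right-hand side and let $\tau$ be the unique cell containing it. For every $H\in H_\chi$ with $\sigma\subset H$, the condition $x\in H\cap|\tau|$ together with the key property forces $\tau\subset H$. To conclude $\sigma\subset\overline{\tau}$ I would check compatibility of signs at every hyperplane: if $\tau\subset H$, then $\sigma\subset H^{\mathrm{op}}$ would force $\tau\subset H^{\mathrm{op}}$ (using $H^{\mathrm{op}}\in H_\chi$), contradicting $\tau\subset H$, so $\sigma\subset\overline{H}$; if $\tau\subset\partial H$, the same argument rules out both $\sigma\subset H$ and $\sigma\subset H^{\mathrm{op}}$, forcing $\sigma\subset\partial H$. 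Expressing $\overline{|\tau|}$ as the intersection, restricted to $S^{m-1}$, of the closed half-spaces $\overline{H}$ for $\tau\subset H$ and the hyperplanes $\partial H$ for $\tau\subset\partial H$, these conditions on $\sigma$ give exactly $\sigma\subset\overline{\tau}$, so $x\in{\rm St}_\chi(|\sigma|)$.

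The main obstacle is not computational but conceptual, namely the use of the $H\mapsto H^{\mathrm{op}}$ symmetry of $H_\chi$ in the reverse inclusion; without it, the hyperplanes where $\sigma$ lies on the opposite side of $H$ would produce no constraint on $x$ and the sign-pattern argument would collapse. Once this symmetry is in hand, the remainder of the proof is a routine translation between set-theoretic and sign-pattern descriptions of cells.
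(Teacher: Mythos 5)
Your proof is correct, and while the easy inclusion ${\rm St}_\chi(|\sigma|)\subseteq\bigl(\cap_{\sigma\subset H_i}H_i\bigr)\cap S^{m-1}$ coincides with the paper's (rule out $\tau\subset H_i^{\mathrm{op}}$ and $\tau\subset\partial H_i$ via $\sigma\subset\overline{\tau}$), your treatment of the reverse inclusion is genuinely different. The paper argues by contraposition: given $x\notin{\rm St}_\chi(|\sigma|)$ lying in a cell $\delta$ with $\sigma\not\subset\overline{\delta}$, it produces a separating half-space $H\in H_\chi$ with $\sigma\subset H$ and $\delta\subset H^c$, by induction on $m$ (the top-dimensional case uses $\cap_{H\in H_{\chi,\delta}}\overline{H}=\overline{\delta}$, and the lower-dimensional case restricts the arrangement to a bounding hyperplane $\partial L$ and invokes the induction hypothesis there). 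You instead argue directly: from $x$ in the right-hand side you read off the sign vector of its cell $\tau$ relative to every $H\in H_\chi$, deduce the compatible sign conditions on $\sigma$, and conclude $\sigma\subset\overline{\tau}$ from the description of $\overline{|\tau|}$ as an intersection of closed half-spaces and hyperplanes. The real content in your version is concentrated in that last face-closure identity for a central arrangement (provable by a short segment argument from a point of $\tau$), which plays exactly the role of the paper's inductive separation lemma; neither is harder than the other, but yours avoids the induction on dimension at the cost of invoking the combinatorial description of cell closures. Your explicit preliminary step that $H_\chi$ is closed under $H\mapsto H^{\mathrm{op}}$ is a genuine improvement in rigor: the paper uses this silently (e.g., when it finds $H\in H_{\chi,\delta}$ with $\sigma\subset(\overline{H})^c$ and then treats the opposite half-space as a member of $H_\chi$, and again in the clause ``$L$ or $-L$ gives the desired half space''), and your observation that adjacent chambers force both half-spaces into the minimal family is the right justification. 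The only caveat, shared with the paper, is the degenerate cell $\sigma=\emptyset$, for which the stated identity should be read with the separate convention ${\rm St}_\chi(|\emptyset|)=S^{m-1}$.
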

\begin{proof}
Firstly we prove ${\rm St}_\chi(|\sigma|)\subset\left(\underset{\sigma\subset H_i\in H_\chi}{\cap}H_i\right)\cap S^{m-1}$.
Let $x\in{\rm St}_\chi(|\sigma|)$.
By the definition of ${\rm St}_\chi(|\sigma|)$ there exists a cell $\tau$ such that $x\in\tau$ and $\sigma\subset\overline{\tau}$.
By considering the condition (T-1), there exist only two cases of the inclusion between $\tau$ and each $H_i$.
\begin{enumerate}
\item $\tau\subset (H_i)^c$.  \ Here we denote by $(\bullet)^c$ the complement of $(\bullet)$.
\item $\tau\subset H_i$.
\end{enumerate}
However the first case never occurs.
As a matter of fact, we have $\tau\subset (H_i)^c$ and $\sigma\subset\overline{\tau}$, that is, $\sigma\subset\overline{(H_i)^c}=(H_i)^c$, which contradicts $\sigma \subset H_i$.
Therefore we can ignore the first case and we obtain ${\rm St}_\chi(|\sigma|)\subset\left(\underset{\sigma\subset H_i\in H_\chi}{\cap}H_i\right)\cap S^{m-1}$.

Next we prove ${\rm St}_\chi(|\sigma|)\supset\left(\underset{\sigma\subset H_i\in H_\chi}{\cap}H_i\right)\cap S^{m-1}$.
It is sufficient to show the implication; $x\notin{\rm St}_\chi(|\sigma|) \Rightarrow x\notin\left(\underset{\sigma\subset H_i\in H_\chi}{\cap}H_i\right)\cap S^{m-1}$.
For any $x\notin{\rm St}_\chi(|\sigma|)$, there exists a cell $\delta$ with $x\in\delta$ and $\delta\notin{\rm St}_\chi(|\sigma|)$, that is, $\sigma\not\subset\overline{\delta}$ holds.
For such $\delta$ and $\sigma$ we can find an open half space $H\in H_\chi$ satisfying $\sigma\subset H$ and $\delta\subset (H)^c$.
In fact, this can be shown by the induction on $m$ in the following way;
When $m=0$, the claim clearly holds.
Then we assume that the claim holds for $m<\ell\ (\ell\geq1)$.
If $\delta\in\Delta_{\ell-1}(\chi)$, it follows from the definition of $\chi$ that $\underset{H\in H_{\chi,\delta}}{\cap}H=\delta$ holds, where $H_{\chi,\delta}$ denotes the set $\{H\in H_\chi\,;\,\delta\subset H\}$.
In particular, we have $\underset{H\in H_{\chi,\delta}}{\cap}\overline{H}=\overline{\delta}$.
Then we can find $H\in H_{\chi,\delta}$ with $\sigma\subset(\overline{H})^c$ because otherwise we get $\sigma\cap\overline{H}\neq\emptyset$ for any $H\in H_{\chi,\delta}$, which implies $\sigma\subset\underset{H\in H_{\chi,\delta}}{\cap}\overline{H}=\overline{\delta}$.
This contradicts the fact $\delta\notin{\rm St}_\chi(|\sigma|)$.

Now we assume $\delta\in\Delta_k(\chi)$ with $k<\ell-1$.
Then we can find $L\in H_{\chi,\delta}$ with $\delta\subset\partial L$.
If $\sigma\cap\partial L=\emptyset$, then clearly $L$ or $-L$ gives the desired half space $H$.
Therefore we may assume $\sigma\subset\partial L$ also.
Then, by applying the induction hypothesis to $\delta$ and $\sigma$ in $\partial L$,
we can find $H\in H_{\chi,\delta}$ such that $\delta\subset\partial L\cap\overline{H}$ and $\sigma\subset\partial L\cap(\overline{H})^c$.
Hence we have obtained the claim.

Therefore the converse implication follows and this completes the proof.
\end{proof}
The following lemma is the crucial key for the proof of Proposition \ref{finer ex}.
\begin{lem}{\label{UV}}
Assume that $M$ is contractible.
Then $M*{\rm St}_{\chi'} (|\tau|)\setminus M*{\rm St}_\chi (|\sigma|)$ is contractible for any $\sigma\in\Delta(\chi)$ and $\tau\in\Delta(\chi')$.
\end{lem}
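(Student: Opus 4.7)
The plan is to reduce the statement to a question about open convex cones in $\mathbb{R}^m$ and then prove contractibility via a star-convexity argument.

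First, I would eliminate $M$: the map $(x, y, t) \mapsto (x, ty)$ yields a homeomorphism $M*K \cong M \times K \times (0,1)$, so the set $M*{\rm St}_{\chi'}(|\tau|) \setminus M*{\rm St}_\chi(|\sigma|)$ is homeomorphic to $M \times ({\rm St}_{\chi'}(|\tau|)\setminus{\rm St}_\chi(|\sigma|)) \times (0,1)$. Since $M$ and $(0,1)$ are contractible, the claim reduces to contractibility of ${\rm St}_{\chi'}(|\tau|)\setminus{\rm St}_\chi(|\sigma|) \subset S^{m-1}$. Writing $V_\chi(\sigma) := \bigcap_{H \in H_\chi,\ \sigma \subset H} H$ for the open convex cone cut out by the walls of $\chi$ containing $\sigma$, and similarly $V_{\chi'}(\tau)$, Lemma \ref{st} identifies each star with the spherical section of the corresponding cone. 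Since $V_{\chi'}(\tau) \setminus V_\chi(\sigma) \subset \mathbb{R}^m \setminus \{0\}$ is an $\mathbb{R}_{>0}$-invariant open cone, it deformation retracts onto its spherical section, so it suffices to show that $V_{\chi'}(\tau) \setminus V_\chi(\sigma)$ is contractible.

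I would then split into cases. If $V_{\chi'}(\tau) \cap V_\chi(\sigma) = \emptyset$ the difference is $V_{\chi'}(\tau)$ (convex, contractible); if $V_{\chi'}(\tau) \subseteq V_\chi(\sigma)$ the difference is empty. In the remaining case, set $S := \{H \in H_\chi : \sigma \subset H\}$ and $S' := \{H \in S : V_{\chi'}(\tau) \not\subseteq H\}$, noting that walls of $S \setminus S'$ already contain $V_{\chi'}(\tau)$ and drop out of the difference. Under $\chi \prec \chi'$, for each $H \in S'$ one checks that $\tau \subset \partial H$: the alternative $\tau \subset H$ would make $H$ one of the walls of $V_{\chi'}(\tau)$, forcing $V_{\chi'}(\tau) \subseteq H$ and contradicting $H \in S'$, while $\tau$ in the opposite open half-space would force $V_{\chi'}(\tau) \cap V_\chi(\sigma) = \emptyset$. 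Writing each $H = \{n_H \cdot x > 0\}$ and fixing $p_\sigma$ in the relative interior of $\sigma$ and $p_\tau$ in the relative interior of $\tau$, the point $p_0 := -p_\sigma + \lambda p_\tau$ satisfies $n_H \cdot p_0 = -n_H \cdot p_\sigma < 0$ for each $H \in S'$ (so $p_0 \in \bigcap_{H \in S'} H^c$), and for every wall $H$ of $V_{\chi'}(\tau)$ one has $n_H \cdot p_0 > 0$ when $\lambda$ is sufficiently large (so $p_0 \in V_{\chi'}(\tau)$).

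Finally, I would show star-convexity of $V_{\chi'}(\tau) \setminus V_\chi(\sigma)$ with respect to $p_0$. For any $x$ in the difference, there is some $H_0 \in S$ with $x \notin H_0$; this $H_0$ must lie in $S'$ (otherwise $V_{\chi'}(\tau) \subseteq H_0$ would force $x \in H_0$). Since both $p_0$ and $x$ lie in the closed half-space $H_0^c$, the segment $[p_0, x]$ lies in $H_0^c$ and (by convexity) in $V_{\chi'}(\tau)$, hence is disjoint from $H_0 \supseteq V_\chi(\sigma)$ and stays in the difference. The main obstacle is the construction of $p_0$ and the verification that $p_0 \in V_{\chi'}(\tau)$; this is where the hypothesis $\chi \prec \chi'$ is essential, as it forces $\tau$ onto the boundary hyperplane of every $H \in S'$ so that $n_H \cdot p_\tau = 0$ and the sign conditions defining $p_0$ work out.
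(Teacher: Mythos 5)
Your argument is correct and follows essentially the same route as the paper's proof: both use Lemma \ref{st} to write the star open sets as intersections of the open half spaces containing the cell, reduce by the same trichotomy to the case $\tau\subset\partial H$ for every relevant wall $H$, produce the same special point by pushing a point of $\tau$ away from a point of $\sigma$ (your $p_0=-p_\sigma+\lambda p_\tau$ is the paper's point $z$ on the ray from $x\in\sigma$ through $y\in\tau$), and conclude by contracting the difference to that point. The only differences are cosmetic: you factor out $M$ and the radial coordinate at the outset and phrase the last step as star-convexity, whereas the paper keeps the join $M*(\cdot)$ throughout and phrases it as contracting a union of convex pieces to their common point.
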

\begin{rmk}
The assumption $\chi\prec\chi'$ is essential, otherwise the claim does not hold.
\end{rmk}
\begin{proof}
We set $U=M*{\rm St}_\chi (|\sigma|)$ and $V=M*{\rm St}_{\chi'} (|\tau|)$.
We also set $H_{\chi,\sigma}=\{H_i\in H_\chi\ ;\ \sigma\subset H_i\}$.
By Lemma \ref{st} we have
\begin{flalign*}
V\setminus U
&=M*\left\{{\rm St}_{\chi'}(|\tau|) \setminus (\underset{H_i\in H_{\chi,\sigma}}{\cap}\,H_i)\cap S^{m-1}\right\} \\
&=\underset{H_i\in H_{\chi,\sigma}}{\cup}\ M*({\rm St}_{\chi'}(|\tau|)\setminus H_i\cap S^{m-1}).
\end{flalign*}
Therefore we just prove that $\underset{H_i\in H_{\chi,\sigma}}{\cup}\ M*({\rm St}_{\chi'}(|\tau|)\setminus H_i\cap S^{m-1})$ is contractible.
By considering $\chi\prec\chi'$ there are three cases of the inclusion between $\tau$ and each $H_i$ containing $\sigma$.
\begin{enumerate}
\item The case of $\tau\subset H_i\cap S^{m-1}$.

Since $\chi'$ is finer than $\chi$ we have ${\rm St}_{\chi'}(|\tau|)\subset H_i$, that is, $M*({\rm St}_{\chi'}(|\tau|)\setminus H_i\cap S^{m-1})=\emptyset$ and we can ignore this case.

\item The case of $\tau\subset -H_i\cap S^{m-1}$.

We have $M*({\rm St}_{\chi'}(|\tau|)\setminus H_i\cap S^{m-1})=M*{\rm St}_{\chi'}(|\tau|)=V$.
Therefore if such an $H_i$ exists, then $V\setminus U=V$, and hence it becomes contractible.

\item The case of $\tau\subset\partial H_i\cap S^{m-1}$.
\end{enumerate}
By the observations in the cases 1 and 2, we may assume that all the $H_i$ containing $\sigma$ satisfy the case 3.
We first show the claim that $\underset{H_i\in H_{\chi,\sigma}}{\cap}\,(H_i)^c\,\cap S^{m-1}$ has an interior point.
We take points $x\in\sigma$ and $y\in\tau$.
Let $z$ be a point in the line passing through $x$ and $y$ so that the points $x,y,z$ are situated in this order.
Then we can easily see, by noticing $\sigma=\underset{H_i\in H_{\chi,\sigma}}{\cap}H_i$ and $y\in H_i\ (H_i\in H_{\chi,\sigma})$, that $z$ becomes an interior point of $\underset{H_i\in H_{\chi,\sigma}}{\cap}\,(H_i)^c\,\cap S^{m-1}$.
By choosing $z$ in close enough to the point $y$, $z$ belongs to ${\rm St}_{\chi'}(|\tau|)$ and hence we have
$z\in{\rm St}_{\chi'}(|\tau|) \cap \{\underset{H_i\in H_{\chi,\sigma}}{\cap}\,(H_i)^c\cap S^{m-1}\}$.
Finally, using $z$, we prove the contractibleness of $V\setminus U$ and complete the proof.
We have
\begin{flalign*}
&{\rm St}_{\chi'}(|\tau|) \cap \{\underset{H_i\in H_{\chi,\sigma}}{\cap}\,(H_i)^c\cap S^{m-1}\} \\
=\,&\underset{H_i\in H_{\chi,\sigma}}{\cap}\ {\rm St}_{\chi'}(|\tau|)\setminus (H_i\cap S^{m-1}).
\end{flalign*}
Because each ${\rm St}_{\chi'}(|\tau|)\setminus H_i\cap S^{m-1}$ is a convex set,
$\underset{H_i\in H_{\chi,\sigma}}{\cup}\ M*({\rm St}_{\chi'}(|\tau|)\setminus H_i\cap S^{m-1})$ is contractible to the point $z\in\underset{H_i\in H_{\chi,\sigma}}{\cap}\ {\rm St}(|\tau|)_{\chi'}\setminus H_i\cap S^{m-1}$.
This complete the proof.
\end{proof}
\begin{lem}{\label{exlem}}
Assume $\chi\prec\chi'$ and $M$ to be contractible.
Then for any $\sigma\in\Delta(\chi)$ and for any $\tau\in\Delta(\chi')$ we have
\begin{equation}{\label{Ext}}
{\rm Ext}^\ell(\mathbb{C}_{\overline{\sigma}},\mathbb{C}_{\overline{\tau}})=0 \quad (\ell\neq 0).
\end{equation}
\end{lem}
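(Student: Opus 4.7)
Set $A:=\overline{M*{\rm St}_\chi(|\sigma|)}$ and $B:=\overline{M*{\rm St}_{\chi'}(|\tau|)}$, so that $\mathbb{C}_{\overline{\sigma}}=\mathbb{C}_A$ and $\mathbb{C}_{\overline{\tau}}=\mathbb{C}_B$. The approach is to expand the Ext-groups through a standard long exact sequence, reducing the vanishing to two topological cohomology computations, one of which is essentially the closed-set analogue of Lemma~\ref{UV}.

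Apply ${\rm RHom}_{\mathbb{C}_X}(-,\mathbb{C}_B)$ to the tautological sequence $0\to\mathbb{C}_{X\setminus A}\to\mathbb{C}_X\to\mathbb{C}_A\to 0$. With $j:X\setminus A\hookrightarrow X$ the open inclusion, the identity $\mathbb{C}_{X\setminus A}=j_!\mathbb{C}$ together with adjunction gives ${\rm RHom}(\mathbb{C}_{X\setminus A},\mathbb{C}_B)\simeq {\rm R}j_*\mathbb{C}_{B\setminus A}$. Taking ${\rm R}\Gamma(X,-)$ of the resulting distinguished triangle produces the long exact sequence
\begin{equation*}
\cdots\to H^{\ell-1}(B\setminus A,\mathbb{C})\to{\rm Ext}^\ell(\mathbb{C}_A,\mathbb{C}_B)\to H^\ell(B,\mathbb{C})\to H^\ell(B\setminus A,\mathbb{C})\to\cdots,
\end{equation*}
so the problem is reduced to computing the cohomology of $B$ and of $B\setminus A$.

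The set $B=M\cup(M*\overline{{\rm St}_{\chi'}(|\tau|)})$ is contractible: the closed star is star-shaped at any point of $|\tau|$, and combined with the contractibility of $M$ this yields a deformation retract of $B$ to a single point. Thus $H^\ell(B,\mathbb{C})=0$ for $\ell\ne 0$. For the second term, Lemma~\ref{st} gives
\begin{equation*}
B\setminus A=\bigcup_{H_i\in H_{\chi,\sigma}}M*\bigl(\overline{{\rm St}_{\chi'}(|\tau|)}\cap(-H_i)\cap S^{m-1}\bigr),
\end{equation*}
and each piece is convex (intersection of open and closed half-spaces through the origin, crossed with the contractible set $M$). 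The interior point $z$ constructed in the proof of Lemma~\ref{UV} lies in every non-empty piece by its defining properties, so the common linear contraction $w\mapsto(1-t)w+tz$ retracts the entire union to $z$; hence $H^\ell(B\setminus A,\mathbb{C})=0$ for $\ell\ne 0$.

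Feeding these vanishings back into the long exact sequence kills ${\rm Ext}^\ell(\mathbb{C}_A,\mathbb{C}_B)$ for $\ell\ge 2$ immediately. For $\ell=1$ one uses that the restriction $H^0(B,\mathbb{C})\to H^0(B\setminus A,\mathbb{C})$ is the identity on $\mathbb{C}$ when $B\setminus A$ is non-empty and has target $0$ otherwise; it is surjective in either case, forcing ${\rm Ext}^1=0$. The main obstacle is the closed-set version of Lemma~\ref{UV}, i.e.\ the contractibility of $B\setminus A$. I expect this to reduce to the same convex argument that appears there, the crucial point being that $z$ was already chosen in the \emph{interior} of $\cap_i(H_i)^c$ and inside the open star ${\rm St}_{\chi'}(|\tau|)\subset\overline{{\rm St}_{\chi'}(|\tau|)}$, so it automatically belongs to each closed piece appearing in the decomposition above.
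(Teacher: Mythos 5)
Your proof is correct and follows essentially the same route as the paper: both reduce ${\rm Ext}^\ell(\mathbb{C}_{\overline{\sigma}},\mathbb{C}_{\overline{\tau}})$ via the long exact sequence relating the cohomology of $B=\overline{M*{\rm St}_{\chi'}(|\tau|)}$ and of $B\setminus A$, and then invoke the contractibility statements of Lemma~\ref{st} and Lemma~\ref{UV}. If anything you are slightly more careful than the paper, which quotes Lemma~\ref{UV} (stated for the open sets $V\setminus U$) to compute $H^k(\overline{V}\setminus\overline{U})$ without comment, whereas you explicitly note that the convexity argument and the choice of the interior point $z$ carry over to the closed-set version.
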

\begin{proof}
We set $U=M*{\rm St}_\chi (|\sigma|)$ and $V=M*{\rm St}_{\chi'} (|\tau|)$.
Then we obtain
\begin{eqnarray}
{\rm Ext}^\ell(\mathbb{C}_{\overline{\sigma}},\mathbb{C}_{\overline{\tau}})
={\rm H}^\ell({\rm RHom}(\mathbb{C}_{\overline{U}},\mathbb{C}_{\overline{V}}))
\simeq{\rm H}_{\overline{U}\cap \overline{V}}^\ell(\overline{V},\mathbb{C}_{\overline{V}}).
\end{eqnarray}
Here we have the following long exact sequence.
\begin{eqnarray}{\label{coh-sequence}}
\begin{CD}
0@>>>{\rm H}_{\overline{U}\cap \overline{V}}^0(\overline{V},\mathbb{C}_{\overline{V}}) @>>> {\rm H}^0(\overline{V},\mathbb{C}_{\overline{V}}) @>>> {\rm H}^0(\overline{V}\setminus\overline{U},\mathbb{C}_{\overline{V}}) \\ 
\\
@>>>{\rm H}_{\overline{U}\cap \overline{V}}^1(\overline{V},\mathbb{C}_{\overline{V}}) @>>> {\rm H}^1(\overline{V},\mathbb{C}_{\overline{V}})  @>>>\cdots.
\end{CD}
\end{eqnarray}
By Lemma \ref{st}, $V$ is contractible and we obtain
\begin{eqnarray*}
{\rm H}^k(\overline{V},\mathbb{C}_{\overline{V}}) \simeq \begin{cases}
    \mathbb{C} & (k=0), \\
    0 & (k\neq 0).
  \end{cases}
\end{eqnarray*}
Moreover, by Lemma \ref{UV} we have
\begin{eqnarray*}
{\rm H}^k(\overline{V}\setminus \overline{U},\mathbb{C}_{\overline{V}}) \simeq \begin{cases}
    \mathbb{C} & (k=0), \\
    0 & (k\neq 0).
  \end{cases}
\end{eqnarray*}
By \eqref{coh-sequence}, ${\rm H}_{\overline{U}\cap \overline{V}}^k(\overline{V},\mathbb{C}_{\overline{V}})=0$ holds for any $k\neq 0,1$.
Moreover we have the exact sequence
\begin{equation*}
0\rightarrow{\rm H}_{\overline{U}\cap \overline{V}}^0(\overline{V},\mathbb{C}_{\overline{V}}) \rightarrow \mathbb{C} \rightarrow \mathbb{C} \rightarrow {\rm H}_{\overline{U}\cap \overline{V}}^1(\overline{V},\mathbb{C}_{\overline{V}}) \rightarrow0.
\end{equation*}
From which we obtain ${\rm H}_{\overline{U}\cap \overline{V}}^0(\overline{V},\mathbb{C}_{\overline{V}})=0$ and ${\rm H}_{\overline{U}\cap \overline{V}}^1(\overline{V},\mathbb{C}_{\overline{V}})=0$.
The proof of Lemma \ref{exlem} has been finished.
\end{proof}

Now we are ready to prove Proposition \ref{finer ex}.
\begin{proof}
We will show existence of $\Theta^k$ under the assumption of that of $\Theta^{k+1}$.
Then by the induction we obtain all the $\Theta^k\,(k=-m,-m+1,\cdots,-1)$.
Assume that we construct $\Theta^{k+1}$.
By \eqref{ex-sequence}, we obtain the following exact sequence for $k=-m,-m+1,\cdots,-1$.
\begin{eqnarray}{\label{sequence}}
0\longrightarrow
{\rm Ker}\,d^k_{\chi'}
\overset{id}{\longrightarrow}
\mathscr{L}_{\chi'}^k\underset{\mathbb{Z}_X}{\otimes} i_*or_{M/X}\overset{d_\chi^k}{\longrightarrow}
{\rm Im}\,d^k_{\chi'}
\longrightarrow0.
\end{eqnarray}
The following long exact sequence is induced from \eqref{sequence}.
\begin{eqnarray}{\label{longs}}
\begin{CD}
0@>>>{\rm Hom}(\mathscr{R}^k_\chi,{\rm Ker}\, d^k_{\chi'}) @>>> {\rm Hom}(\mathscr{R}^k_\chi,\mathscr{R}^k_{\chi'}) @>>> {\rm Hom}(\mathscr{R}^k_\chi,{\rm Im}\, d^k_{\chi'}) \\ 
\\
@>>>{\rm Ext}^1(\mathscr{R}^k_\chi,{\rm Ker}\, d^k_{\chi'}) @>>> {\rm Ext}^1(\mathscr{R}^k_\chi,\mathscr{R}^k_{\chi'})  @>>> \cdots,
\end{CD}
\end{eqnarray}
where $\mathscr{R}^k_\chi=\mathscr{L}_{\chi}^k\underset{\mathbb{Z}_X}{\otimes} i_*or_{M/X}$.
Hence it is enough to show ${\rm Ext}^1(\mathscr{R}_{\chi}^k,{\rm Ker}\, d^k_{\chi'})=0$ for the existence of $\Theta_k\in {\rm Hom}(\mathscr{R}_{\chi}^k,\mathscr{R}_{\chi'}^k)$.
By the definition of $\mathscr{R}_{\chi}^k$ we have
\begin{align*}
{\rm Ext}^1(\mathscr{R}_{\chi}^k,{\rm Im}\, d^k_{\chi'})
=&{\rm Ext}^1\left(\left(\underset{\sigma\in\Delta_{k+m-1}(\chi)}{\oplus}\mathbb{C}_{\overline{\sigma}}\right)\underset{\mathbb{Z}_X}{\otimes} i_*or_{M/X},{\rm Ker}\, d^k_{\chi'}\right)\\
\simeq&\underset{\sigma\in\Delta_{k+m-1}(\chi)}{\oplus}{\rm Ext}^1(\mathbb{C}_{\overline{\sigma}}\underset{\mathbb{Z}_X}{\otimes} i_*or_{M/X},{\rm Ker}\, d^k_{\chi'})\\
\simeq&\underset{\sigma\in\Delta_{k+m-1}(\chi)}{\oplus}{\rm Ext}^1(\mathbb{C}_{\overline{\sigma}},{\rm Ker}\, d^k_{\chi'})\underset{\mathbb{Z}_X}{\otimes} i_*or_{M/X}.
\end{align*}
Therefore it is sufficient to prove the following equation for any $k=-m,-m+1,\cdots,-1$ and for any $\sigma\in\Delta(\chi)$.
\begin{equation}\label{ext1}
{\rm Ext}^1(\mathbb{C}_{\overline{\sigma}},{\rm Ker}\, d^k_{\chi'})=0.
\end{equation}
By taking the short exact sequence below into account
$$
0\longrightarrow
{\rm Ker}\,d^{k-1}_{\chi'}
\overset{id}{\longrightarrow}
\mathscr{R}_{\chi'}^{k-1}\overset{d^{k-1}_{\chi'}}{\longrightarrow}
{\rm Im}\,d^{k-1}_{\chi'}={\rm Ker}\,d^k_{\chi'}
\longrightarrow0,
$$
we can obtain the following long exact sequence.
\begin{eqnarray}{\label{longs}}
\begin{CD}
0@>>>{\rm Hom}(\mathbb{C}_{\overline{\sigma}},{\rm Ker}\, d^{k-1}_{\chi'}) @>>> {\rm Hom}(\mathbb{C}_{\overline{\sigma}},\mathscr{R}^{k-1}_{\chi'}) @>>> {\rm Hom}(\mathbb{C}_{\overline{\sigma}},{\rm Ker}\, d^k_{\chi'}) \\ 
\\
@>>>{\rm Ext}^1(\mathbb{C}_{\overline{\sigma}},{\rm Ker}\, d^{k-1}_{\chi'}) @>>> {\rm Ext}^1(\mathbb{C}_{\overline{\sigma}},\mathscr{R}^{k-1}_{\chi'})  @>>> \cdots.
\end{CD}
\end{eqnarray}
Let us prove the following equation for any $\ell>0$.
\begin{equation}{\label{ext}}
{\rm Ext}^\ell(\mathbb{C}_{\overline{\sigma}},\mathscr{R}^{k-1}_{\chi'})=0.
\end{equation}\label{extcs}
By remembering the definition of $\mathscr{R}^{k}_{\chi'}$, we can calculate ${\rm Ext}^l(\mathbb{C}_{\overline{M*{\rm St}(|\sigma|)}},\mathscr{R}^{k}_{\chi'})$ as follows.
\begin{align*}
{\rm Ext}^\ell(\mathbb{C}_{\overline{\sigma}},\mathscr{R}^{k}_{\chi'})
=&{\rm Ext}^\ell\left(\mathbb{C}_{\overline{\sigma}},\underset{\tau\in\Delta_{m+k-1}(\chi')}{\oplus}\mathbb{C}_{\overline{\tau}}\underset{\mathbb{Z}_X}{\otimes} i_*or_{M/X}\right)\\
\simeq&\underset{\tau\in\Delta_{m+k-1}(\chi')}{\oplus}{\rm Ext}^\ell \left(\mathbb{C}_{\overline{\sigma}},\mathbb{C}_{\overline{\tau}}\right)\underset{\mathbb{Z}_X}{\otimes} i_*or_{M/X}\\
=&0.
\end{align*}
The last equation follows from Lemma \ref{UV}.
By \eqref{longs} and \eqref{ext} we obtain
$$
{\rm Ext}^\ell(\mathbb{C}_{\overline{\sigma}},{\rm Ker}\, d^k_{\chi'})
\simeq{\rm Ext}^{\ell+1}(\mathbb{C}_{\overline{\sigma}},{\rm Ker}\, d^{k-1}_{\chi'}).
$$
This means that
\begin{equation*}
{\rm Ext}^\ell(\mathbb{C}_{\overline{\sigma}},{\rm Ker}\, d^k_{\chi'})
\simeq{\rm Ext}^{\ell+k+m}(\mathbb{C}_{\overline{\sigma}},{\rm Ker}\, d^{-m}_{\chi'})
=0
\end{equation*}
because of ${\rm Ker}\,d^{-m}_{\chi'}=0$.
Hence we have shown \eqref{ext1} and the proof of Proposition \ref{finer ex} has been finished.
\end{proof}

By Proposition \ref{finer ex}, the following diagram of complexes commutes for $\chi\prec\chi'$.
\begin{eqnarray}
\begin{CD}
\mathscr{L}^{\bullet}_{\chi}\underset{\mathbb{Z}_X}{\otimes} i_*or_{M/X} @>\sim>> \mathbb{C}_M\\
@V\Theta^{\bullet}VV @| \\
\mathscr{L}^{\bullet}_{\chi'}\underset{\mathbb{Z}_X}{\otimes} i_*or_{M/X} @>\sim>> \mathbb{C}_M.
\end{CD}
\end{eqnarray}
By applying ${\rm RHom}_{\mathbb{C}_X}(D(\bullet)\otimes\mathbb{C}_U,\mathscr{F})$ to the above diagram,
taking the $0$-th cohomology groups and taking inductive limit $\ilim[U\supset M]$ of these cohomology groups, we obtain
\begin{eqnarray*}
\begin{CD}
\hmx @>\sim>\beta^{-1}_\chi> \dfrac{\mathscr{F}_m(\chi)}
{{\rm Im}(\mathscr{F}_{m-1}(\chi)\rightarrow\mathscr{F}_m(\chi))}
\underset{\mathbb{Z}_M(M)}{\otimes}or_{M/X}(M) \\
@| @VVA^0(\Theta)V \\
\hmx @>\sim>\beta^{-1}_{\chi'}> \dfrac{\mathscr{F}_m(\chi')}
{{\rm Im}(\mathscr{F}_{m-1}(\chi')\rightarrow\mathscr{F}_m(\chi'))}
\underset{\mathbb{Z}_M(M)}{\otimes}or_{M/X}(M),
\end{CD}
\end{eqnarray*}
where $A^0(\Theta)=\ilim[U\in\mathcal{U}]{\rm H}^0({\rm RHom}_{\mathbb{C}_X}(D(\Theta^\bullet)\otimes\mathbb{C}_U,\mathscr{F}))$.
Moreover by the calculation
\begin{flalign*}
\rho_{\chi'} \circ A^0(\Theta)(f_\sigma\otimes\mathds{1})
=&\rho_{\chi'}(<\sigma,\psi(\sigma)>f_\sigma|_{\psi(\sigma)}\otimes\mathds{1}) \\
=&<\sigma,\psi(\sigma)>a_\mathds{1}(\psi(\sigma))f_\sigma|_{\psi(\sigma)} \\
=&a_\mathds{1}(\sigma)f_\sigma|_{\psi(\sigma)}
=a_\mathds{1}(\sigma)f_\sigma
=\rho_\chi (f_\sigma\otimes\mathds{1}),
\end{flalign*}
we have the following commutative diagram.
\begin{eqnarray}\label{Fhat}
\begin{CD}
\dfrac{\mathscr{F}_m(\chi)}
{{\rm Im}(\mathscr{F}_{m-1}(\chi)\rightarrow\mathscr{F}_m(\chi))}
\underset{\mathbb{Z}_M(M)}{\otimes}or_{M/X}(M) @>\sim>\rho_\chi> \hat{\rm H}(\mathscr{F}(\mathcal{W})) \\
@VA^0(\Theta)VV @| \\
\dfrac{\mathscr{F}_m(\chi')}
{{\rm Im}(\mathscr{F}_{m-1}(\chi')\rightarrow\mathscr{F}_m(\chi'))}
\underset{\mathbb{Z}_M(M)}{\otimes}or_{M/X}(M) @>\sim>\rho_{\chi'}> \hat{\rm H}(\mathscr{F}(\mathcal{W})).
\end{CD}
\end{eqnarray}
Hence we get
\begin{flalign*}
\rho_\chi\circ\beta^{-1}_\chi
=\rho_{\chi'} \circ A^0(\Theta)\circ\beta^{-1}_{\chi}
=\rho_{\chi'} \circ \beta^{-1}_{\chi'},
\end{flalign*}
and we have seen that $\rho=\rho_\chi\circ\beta^{-1}_\chi$ does not depend on the choices of $\chi$.

We finally show $b_\mathcal{W}\circ\rho=id$ and $\rho\circ b_\mathcal{W}=id$.
The formula $b_\mathcal{W}\circ\rho=id$ follows from the commutativity of \eqref{fivecom}.

To show $\rho\circ b_\mathcal{W}=id$, we may fix $\chi\in\mathcal{T}$ and $W\in\mathcal{W}$, and take $f\in\mathscr{F}(W)$.
By Proposition \ref{main1} and Definition \ref{rho} we have
\begin{flalign}
\rho\circ b_\mathcal{W}(f)
=\rho_\chi \circ \beta_\chi^{-1} \circ b_\mathcal{W} (f \otimes \mathds{1}_\sigma)
=\rho_\chi(f a_\mathds{1}(\sigma)\otimes \mathds{1}_\sigma)
=(a_\mathds{1}(\sigma))^2 f= f.
\end{flalign}
These complete the proof of Theorem \ref{mainthm}.
\section{Application to Laplace hyperfunctions}
In the last section, we shall introduce intuitive representation of Laplace hyperfunctions as an application of Theorem \ref{mainthm}. 

First we recall the sheaf of Laplace hyperfunctions.
Let $n$ be a natural number and $S^{2n-1}$ the unit sphere in $\mathbb{C}^n \simeq \mathbb R^{2n}$.
\begin{dfn}
We define the radial compactification $\mathbb D_{\mathbb{C}^n}$ of $\mathbb{C}^n$ as follows.
\begin{equation}
\rcsp=\mathbb{C}^n \sqcup S^{2n-1}\infty.
\end{equation}
\end{dfn}
In the same way we define
\begin{equation}
\mathbb{D}_{\mathbb{R}^n}=\mathbb{R}^n \sqcup S^{n-1}\infty.
\end{equation}
A family of fundamental neighborhoods of $z_0\in \mathbb{C}^n$ consists of 
\begin{equation}
B_\varepsilon(z_0)=\{ z\in\mathbb{C}^n\,;\,|z-z_0|<\varepsilon \}
\end{equation}
for $\varepsilon>0$, and that of $\zeta_0\infty\in S^{2n-1}\infty$ consists of an open cone

\begin{equation}
G_r(\Gamma)=\left\{z\in \mathbb{C}^n\,;\,|z|>r,\frac{z}{|z|}\in\Gamma\right\}\cup\Gamma\infty,
\end{equation}
where $r>0$ and $\Gamma$ runs through open neighborhoods of $\zeta_0$ in $S^{2n-1}$.
We define the set of holomorphic functions of exponential type on $\mathbb{D}_{\mathbb{C}^n}$.
\begin{dfn}
Let $U$ be an open subset of $\rcsp$. We define the set $\hexpo(U)$ of holomorphic functions of exponential type on $U$ as follows.
A holomorphic function $f(z)$ on $U\cap\mathbb{C}^n$ belongs to $\hexpo(U)$ if, for any compact set $K\subset U$, there exist positive constants $C_K$ and $H_K$ such that
\begin{eqnarray}
|f(z)|\leq C_Ke^{H_K|z|}\hspace{40pt} (z\in K\cap\mathbb{C}^n).
\end{eqnarray}
Moreover we denote by $\hexpo$ the sheaf of holomorphic functions of exponential type on $\rcsp$.
\end{dfn}

To construct intuitive representation of Laplace hyperfunctions, we need the vanishing theorem of the global cohomology groups on a Stein open set with coefficients in $\hexpo$.
However, by Umeta and Honda (\cite{Honda}), we cannot expect such a vanishing theorem generally.
To overcome this difficulty, they introduce a new property called regular at $\infty$ for a subset in $\mathbb{D}_{\mathbb{C}^n}$.

Let $U$ be a subset in $\rcsp$.
We define the closed set ${\rm clos}_\infty^1 (U) \subset S^{2n-1}\infty$ as follows.
A point $z\in S^{2n-1}\infty$ belongs to ${\rm clos}_\infty^1 (U)$ if there exists a point sequence $\{z_k\}_{k\in\mathbb{N}}$ in $U\cap \mathbb{C}^n$ such that

\begin{eqnarray}
	z_k\rightarrow z\, \text{and}\,
\frac{|z_{k+1}|}{|z_k|}\rightarrow 1\ \text{in}\ \rcsp\ (k\rightarrow\infty).
\end{eqnarray}
Set 
\begin{equation}
N^1_\infty(U)=S^{2n-1}\infty\setminus{\rm clos}^1_\infty(\mathbb{C}^n \setminus U).
\end{equation}

\begin{dfn}[\cite{Honda}]
Let $U$ be an open set in $\rcsp$.
If $N^1_\infty(U)=U\cap S^{2n-1}\infty$ holds, we say that $U$ is regular at $\infty$.
\end{dfn}
Then we have the following theorem.
\begin{thm}[\cite{Honda}]{\label{th:expo-vanishing}}
Let $U$ be an open set in $\rcsp$.
Assume that $U$ is regular at $\infty$ and $U \cap \mathbb{C}^n$ is a Stein open set in $\mathbb{C}^n$.
Then we obtain
\begin{equation}
\operatorname{H}^k(U,\, \hexpo) = 0 \qquad (k \neq 0).
\end{equation}
\end{thm}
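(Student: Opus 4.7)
The plan is to adapt the classical proof of Cartan's Theorem B to the exponential-growth setting, combining Hörmander's weighted $L^{2}$ estimates for $\bar\partial$ with a careful control of growth at infinity governed by the regularity hypothesis.

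First, I would introduce a Dolbeault-type resolution $\mathscr{A}^{0,\bullet,\exp}$ of $\hexpo$: sections of $\mathscr{A}^{0,q,\exp}$ over an open $V\subset\rcsp$ are smooth $(0,q)$-forms on $V\cap\mathbb{C}^{n}$ whose coefficients satisfy the same kind of exponential bound $|f(z)|\le C_{K}e^{H_{K}|z|}$ on each compact $K\subset V$ as in the definition of $\hexpo$. Exactness at finite points is the ordinary Dolbeault lemma. Exactness at a point $\zeta_{0}\infty\in S^{2n-1}\infty$ reduces to solving $\bar\partial$ on a small truncated cone $G_{r}(\Gamma)$ with exponential-type estimates, obtainable from Hörmander's $L^{2}$ theory on a pseudoconvex conic domain with a weight involving a large multiple of $|z|$. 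Softness of $\mathscr{A}^{0,q,\exp}$ follows from partitions of unity built from cut-offs that are radially constant near infinity.

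Once the resolution is in place, the cohomology $\operatorname{H}^{k}(U,\hexpo)$ is computed by the complex $\Gamma(U,\mathscr{A}^{0,\bullet,\exp})$, so it suffices to solve $\bar\partial u=f$ globally with exponential bounds for any $\bar\partial$-closed $f\in\Gamma(U,\mathscr{A}^{0,q,\exp})$, $q\ge 1$. Choose a strictly plurisubharmonic exhaustion $\psi$ of the Stein open set $U\cap\mathbb{C}^{n}$ and apply Hörmander's $L^{2}$ estimates with weight $\phi=\psi+H|z|+N\log(1+|z|^{2})$, where $H$ and $N$ are chosen so that $f\in L^{2}_{loc}(e^{-\phi})$ with the exponential rate dictated by a fixed compact exhaustion of $U$. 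Standard Hörmander theory produces a solution $u\in L^{2}_{loc}(e^{-\phi})$, which by interior elliptic regularity is smooth and obeys an $L^{\infty}$ exponential bound on every compact subset of $U\cap\mathbb{C}^{n}$.

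The main obstacle is the passage from these compact-in-$\mathbb{C}^{n}$ estimates to the correct exponential bound on an arbitrary compact $K\subset U$ that touches $S^{2n-1}\infty$. Here the regularity-at-infinity hypothesis $N^{1}_{\infty}(U)=U\cap S^{2n-1}\infty$ enters decisively: it prevents points of $\mathbb{C}^{n}\setminus U$ from accumulating to $U\cap S^{2n-1}\infty$ along sequences of comparable moduli, which otherwise would force the mean-value / Cauchy-type representation of $u$ inherited from the $L^{2}$ estimate to blow up faster than any exponential along such approaches. Exploiting this quantitatively, by comparing $|z_{k+1}|/|z_{k}|$ along admissible approach sequences with the distance from $z_{k}$ to $\mathbb{C}^{n}\setminus U$, one upgrades the bound to the required uniform exponential estimate near $U\cap S^{2n-1}\infty$. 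This step is where the hypothesis is genuinely used and constitutes the heart of the argument of \cite{Honda}.
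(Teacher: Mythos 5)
This theorem is imported from \cite{Honda} and the present paper gives no proof of it, so there is no internal argument to compare your sketch against; I can only judge whether your outline would actually close. It would not, as it stands. The overall architecture (a Dolbeault-type resolution by $(0,q)$-forms of exponential type, Hörmander weighted $L^{2}$ estimates on the Stein set $U\cap\mathbb{C}^{n}$, and the regularity-at-infinity hypothesis to control the estimate near $S^{2n-1}\infty$) is the right one, but the two steps that constitute the actual content of the theorem are asserted rather than proved. First, exactness of your complex at a point $\zeta_{0}\infty$ and the global solvability of $\bar\partial u=f$ on $U$ with the correct bounds are essentially the same hard statement: you must pass from a solution $u\in L^{2}_{loc}(e^{-\phi})$ to a sup-norm bound $|u(z)|\le C_{K}e^{H'_{K}|z|}$ on compacts $K\subset U$ that meet $S^{2n-1}\infty$. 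The mean-value (or Cauchy--Pompeiu) estimate on a polydisc of radius $\delta(z)$ centered at $z$ loses a factor like $\delta(z)^{-n}e^{H\delta(z)}$, so one needs $\log(1/\delta(z))=o(|z|)$ with the polydisc still inside $U$; this is precisely what the condition $N^{1}_{\infty}(U)=U\cap S^{2n-1}\infty$ must be shown to deliver quantitatively (a sequence in $\mathbb{C}^{n}\setminus U$ violating such a lower bound on the distance would have consecutive moduli of ratio tending to $1$ and would converge to a point of $U\cap S^{2n-1}\infty$). Your text gestures at this comparison but never carries it out, and it is the heart of the matter.

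Second, your choice of a single weight $\phi=\psi+H|z|+N\log(1+|z|^{2})$ cannot accommodate a general closed form $f\in\Gamma(U,\mathscr{A}^{0,q,\exp})$, because by definition the exponential type $H_{K}$ of a section of $\hexpo$ (and of your resolution) is allowed to depend on the compact $K\subset U$: there need be no single $H$ with $|f|\le Ce^{H|z|}$ on all of $U\cap\mathbb{C}^{n}$. One must therefore either build a plurisubharmonic weight interpolating the types $H_{K}$ along an exhaustion of $U$ by compacts of $\rcsp$, or exhaust $U$ and splice local solutions by a Mittag-Leffler scheme, which in turn requires a Runge-type approximation theorem for $\hexpo$ that is itself nontrivial and absent from your outline. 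Until these two points are supplied, the proposal is a plausible strategy, not a proof.
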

As a similar notion, we define $N_\infty(U) \subset S^{2n-1}\infty$ for a subset $U$ in $\mathbb{D}_{\mathbb{C}^n}$ as
\begin{equation}
	N_\infty(U) = S^{2n-1}\infty\setminus\overline{\left( \mathbb{C}^n \setminus U \right)},
\end{equation}
where the closure $\overline{(\bullet)}$ is taken in $\rcsp$.
Note that $N^1_\infty(U) \subset N_\infty(U)$ holds.
\begin{dfn}
For an open set $A\subset\mathbb{C}^n$, we define
\begin{equation}
\widehat{A}=A\cup N_\infty(A).
\end{equation}
\end{dfn}
Remark that $\widehat{A}$ is the largest open set in $\rcsp$ such that $\widehat{A} \cap \mathbb{C}^n = A$ holds.
We also define the similar set $\widehat{B}\subset\mathbb{D}_{\mathbb{R}^n}$ for a set $B\subset\mathbb{R}^n$.
Sometimes we write $\widehat{}\,\bullet$ instead of $\widehat{\bullet}$.
\begin{dfn}[\cite{Honda}]
The sheaf $\mathscr{B}^{\rm exp}_{\mathbb{D}_{\mathbb{R}^n}}$ of Laplace hyperfunctions on $\mathbb{D}_{\mathbb{R}^n}$ is defined as follows.
\begin{eqnarray}
\mathscr{B}^{\rm exp}_{\mathbb{D}_{\mathbb{R}^n}}=
\mathscr{H}^n_{\mathbb{D}_{\mathbb{R}^n}}(\hexpo)\otimes 
or_{\mathbb{D}_{\mathbb{R}^n}/\mathbb{D}_{\mathbb{C}^n}},
\end{eqnarray}
where $or_{\mathbb{D}_{\mathbb{R}^n}/\mathbb{D}_{\mathbb{C}^n}}$ is the orientation sheaf
$\mathscr{H}^n_{\mathbb{D}_{\mathbb{R}^n}}(\mathbb{Z}_{\rcsp})$.
\end{dfn}

Let $\Omega$ be an open cone in $\mathbb{R}^n$ with vertex $a\in\mathbb{R}^n$.
Note that $\Omega$ is not assumed to be convex.
We define an open set $X$ in $\mathbb{C}^n$ as follows.
\begin{equation}
X =\{z = x+\sqrt{-1}y \in \mathbb{C}^n;\, x \in \Omega,\, |y|^2 < |x-a|^2 + 1\}.
\end{equation}
Note that $\widehat{\Omega} = \widehat{X} \cap \mathbb{D}_{\mathbb{R}^n}$.

We define intuitive representation of Laplace hyperfunctions $\bexpo(\widehat{\Omega})$ on $\widehat{\Omega}$.
First we define the family $\mathcal{W}$ of open sets in $\widehat{X}$.
Let $\Gamma$ be an $\mathbb{R}_+$-conic connected open set in $\mathbb{R}^{n}$.

\begin{dfn}
Let $W$ be an open set in $\widehat{X}$.
We say that $W$ is an infinitisimal wedge of type $\Omega\times\sqrt{-1}\Gamma$ if and only if the following conditions hold.
\begin{enumerate}
\item $W \subset \widehat{}\,(\Omega \times \sqrt{-1}\Gamma)$.
\item For any open proper subcone $\Gamma'$ of $\Gamma$, there exists an open neighborhood $U \subset \widehat{X}$ of $\widehat{\Omega}$ such that
\begin{equation}
\widehat{}\,(\Omega \times \sqrt{-1}\Gamma') \cap U \subset W.
\end{equation}
\end{enumerate}
\end{dfn}
Let $\mathcal{W}(\sqrt{-1}\Gamma)$ be a family of all infinitisimal wedges of type $\Omega\times\sqrt{-1}\Gamma$ in $\widehat{X}$, and we set
\begin{equation}
\mathcal{W} = \underset{\Gamma}{\bigcup}\,\, \mathcal{W}(\sqrt{-1}\Gamma).
\end{equation}
Here $\Gamma$ runs through all the $\mathbb{R}_+$-conic connected open set in $\mathbb{R}^n$.

\begin{thm}
For aforementioned $\mathcal{W}$, there exists $(\mathcal{T},\mathcal{U},\,\mathcal{W})$ which satisfies the conditions (T), (U) and (W), and we have intuitive representation of Laplace hyperfunctions $\bexpo(\widehat{\Omega})$ by
\begin{eqnarray}
\hat{\rm H}(\hexpo(\mathcal{W}))=\left(\underset{W \in \mathcal{W}}{\oplus} \hexpo(W) \right)/\mathcal{R},
\end{eqnarray}
where $\mathcal{R}$ is a $\mathbb{C}$-vector space generated by the following elements.
\begin{eqnarray}
f\,{\oplus}\,(-f|_{W_2})\quad (f\in\hexpo(W_1)).
\end{eqnarray}
Here $W_1$ and $W_2$ are open sets in $\mathcal{W}$ with $W_2\subset W_1$.
\end{thm}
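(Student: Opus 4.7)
The plan is to exhibit the triple $(\mathcal{T},\mathcal{U},\mathcal{W})$ explicitly by mimicking Example~2.13, but working inside the radial compactification $\widehat{X}$ rather than in $X$, and then invoke Theorem~\ref{th:expo-vanishing} at the crucial point. Throughout I would identify $\widehat{X}$ homeomorphically with $\widehat{\Omega}\times D^n$ by an $\iota$ which matches the real part with $\widehat{\Omega}$ and rescales the imaginary fibre radially (so $m=n$ and $M=\widehat{\Omega}$ in the framework of Section~3). Under this identification, the operation $\widehat{\Omega}*K$ for $K\subset S^{n-1}$ becomes the radial cone with edge $\widehat{\Omega}$ in the imaginary direction, which is exactly the shape of an infinitesimal wedge of the given type.

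For $\mathcal{T}$ I would take the family in Example~2.13, namely all stratifications of $S^{n-1}$ cut out by finitely many linear hyperplanes through the origin; conditions (T-1) and (T-2) are then immediate from taking common refinements. For $\mathcal{U}$ I would let $\mathcal{U}$ consist of those open neighborhoods $U$ of $\widehat{\Omega}$ in $\widehat{X}$ such that $U\cap\mathbb{C}^n$ is Stein and $U$ is regular at $\infty$, with the additional requirement that, for every $\chi\in\mathcal T$ and $\sigma\in\Delta(\chi)$, the open set $\widehat{}\,(\Omega\times\sqrt{-1}\,\mathrm{St}_\chi(|\sigma|))\cap U$ inherits both properties. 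The existence of a cofinal family of such $U$ (and the nontrivial closure property (U-2) obtained by intersecting two of them) is precisely what the Grauert-type theorem advertised in the introduction provides: it produces arbitrarily small Stein neighborhoods that remain Stein and regular at $\infty$ after intersection with the relevant real-convex cones. Granting this, (U-1) becomes a direct application of Theorem~\ref{th:expo-vanishing} to the open set $\widehat{\Omega}*\mathrm{St}_\chi(|\sigma|)\cap U$, since that set is of the form $\widehat{}\,(\Omega\times\sqrt{-1}\Gamma_\sigma)\cap U$ for the $\mathbb{R}_+$-conic open set $\Gamma_\sigma$ generated by $\mathrm{St}_\chi(|\sigma|)$, and it is Stein and regular at infinity by construction.

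For $\mathcal{W}$ I would take $\mathcal{W}=\bigcup_\Gamma \mathcal{W}(\sqrt{-1}\Gamma)$ as in the statement. Condition (W-1) is then built in: for any $\chi$, $\sigma$ and $U\in\mathcal U$, the set $\widehat{\Omega}*\mathrm{St}_\chi(|\sigma|)\cap U$ is an infinitesimal wedge of type $\Omega\times\sqrt{-1}\Gamma_\sigma$. Condition (W-2) follows from the defining property of an infinitesimal wedge: given $W\in\mathcal{W}(\sqrt{-1}\Gamma)$, choose a relatively compact conic subcone $\Gamma'\Subset\Gamma$ that is the star of a full-dimensional cell in some $\chi\in\mathcal T$ and apply the second clause of the wedge definition to obtain $U\in\mathcal U$ with $\widehat{}\,(\Omega\times\sqrt{-1}\Gamma')\cap U\subset W$. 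Condition (W-3), cone connectivity, is handled by the standard trick: given two full-dimensional cells $\sigma_1,\sigma_2$ whose star-wedges lie in $W$, choose a common refinement $\chi$ in $\mathcal T$ and a path of adjacent full-dimensional cells $\tau_1,\dots,\tau_\ell$ interpolating between them on $S^{n-1}$; by shrinking $U$ finitely many times (using (U-2)) one ensures that each intermediate $\widehat{\Omega}*\mathrm{St}(|\tau_k\wedge\tau_{k+1}|)\cap U'$ remains inside $W$.

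The main obstacle I expect is the verification that the family $\mathcal U$ exists with the strong property demanded by (U-1), that is, that one can produce Stein, regular-at-$\infty$ neighborhoods of $\widehat{\Omega}$ whose intersections with all the conical shapes $\widehat{}\,(\Omega\times\sqrt{-1}\,\mathrm{St}_\chi(|\sigma|))$ remain Stein and regular at $\infty$. Steinness near finite points is standard Oka--Cartan, but preserving regularity at $\infty$ is delicate because $\Omega$ itself is allowed to be non-convex and unbounded; this is exactly where the Grauert-type result for $\mathbb{D}_{\mathbb{C}^n}$ alluded to in the introduction enters, and I would invest most of the technical effort in reducing (U-1) to the hypotheses of that theorem. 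Once (U-1) is settled, conclusions on intuitive representation follow from Theorem~\ref{mainthm} applied to $\mathscr{F}=\hexpo$ and $M=\widehat{\Omega}$.
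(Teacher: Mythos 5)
Your proposal follows essentially the same route as the paper: take $\mathcal{T}$ to be all hyperplane stratifications of $S^{n-1}$, take $\mathcal{U}$ to be neighborhoods of $\widehat{\Omega}$ that are regular at $\infty$ with Stein finite part (whose existence is exactly what the paper's Grauert-type theorem, Theorem~\ref{th:laplace-grauert}, supplies), and verify (U-1) via the vanishing theorem of Honda--Umeta (Theorem~\ref{th:expo-vanishing}). The paper likewise leaves the verification of (T), (U), (W) at the level of an assertion once these two theorems are in place, so your treatment matches it both in strategy and in where the technical weight is placed.
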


To prove this theorem, we construct $(\mathcal{T},\mathcal{U},\mathcal{W})$ satisfying the condition (T), (U) and (W).
First we show the theorem of Grauert type on $\rcsp$.

\begin{thm}{\label{th:laplace-grauert}}
Let $\Omega$ be an $\mathbb{R}_+$-conic open set in $\mathbb{R}^n$ and let $V$ be an open neighborhood of $\widehat{\Omega}$ in $\rcsp$.
Then we can find an open set $U$ in $\rcsp$ satisfying the following three conditions.
\begin{enumerate}
\item $\widehat{\Omega} \subset U \subset V$.
\item $U$ is regular at $\infty$.
\item $U \cap \mathbb{C}^n$ is a Stein open set.
\end{enumerate}
\end{thm}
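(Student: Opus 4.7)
The strategy is to construct $U$ as a sublevel set of a carefully designed plurisubharmonic function on $\mathbb{C}^n$, with its shape adapted to the asymptotic cone structure of $\widehat{\Omega}$.

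For the asymptotic behavior at infinity, I would first observe that, since $V$ is open around $\widehat{\Omega}$ and fundamental neighborhoods of points of $S^{2n-1}\infty$ in $\rcsp$ are truncated cones $G_r(\Gamma)$, after possibly shrinking $V$ one obtains an $\mathbb{R}_+$-conic open neighborhood $\Gamma_0 \subset S^{2n-1}$ of $\widehat{\Omega}\cap S^{n-1}\infty$ and positive constants $\alpha,\,\beta > 0$ (with $\alpha < 1$ as small as we wish) such that the asymptotic tube
\[
W_\alpha = \{z = x + \sqrt{-1}\,y \in \mathbb{C}^n \,;\, |x|>\beta,\ x/|x|\in\Gamma_0,\ |y|^2 < \alpha(|x-a|^2+1)\}
\]
is contained in $V$. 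A direct Levi-form computation gives $\partial\bar\partial|y|^2 = \tfrac{1}{2}I$ and $\partial\bar\partial|x-a|^2 = \tfrac{1}{2}I$, so the defining function $\rho_\alpha(z) = |y|^2 - \alpha(|x-a|^2+1)$ is strictly plurisubharmonic on all of $\mathbb{C}^n$ with Levi form $\tfrac{1-\alpha}{2}I$; the quadratic growth of $\alpha(|x-a|^2+1)$ then guarantees that $W_\alpha$ is asymptotically conic at infinity.

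Next, for the bounded portion of $\widehat{\Omega}$, say where $|x-a|\le 2\beta$, I would invoke the classical Grauert theorem for totally real subsets to obtain a strictly psh function $\rho_0$ defined on a neighborhood of $\Omega\cap\{|x-a|\le 2\beta\}$ whose sublevel set $\{\rho_0<0\}$ is a Stein neighborhood of this bounded portion lying inside $V$. Merging $\rho_0$ and $\rho_\alpha$ through a psh smoothing of their regularized maximum (Richberg's theorem) yields a single globally defined smooth strictly plurisubharmonic function $\varphi$ on $\mathbb{C}^n$ with $\Omega\subset\{\varphi<0\}\subset V\cap\mathbb{C}^n$ and with $\{\varphi<0\}$ coinciding with $W_\alpha$ for $|x-a|$ large. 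Setting $U = \{\varphi<0\}\cup N_\infty(\{\varphi<0\})$, condition (1) holds by construction; condition (3) follows because a sublevel set of a strictly psh function in $\mathbb{C}^n$ is pseudoconvex, hence Stein, with psh exhaustion $-\log(-\varphi)+|z|^2$; and condition (2) follows from the asymptotically conic shape, since for a direction $\zeta\infty \in U\cap S^{2n-1}\infty$ the linear scaling of $W_\alpha$ forbids any sequence $z_k\in\mathbb{C}^n\setminus U$ with $z_k\to\zeta\infty$ to satisfy $|z_{k+1}|/|z_k|\to 1$.

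The main obstacle is the merging step. The finite part of $\widehat{\Omega}$ may be highly irregular because $\Omega$ is not assumed to be convex, yet one must produce a single global psh defining function $\varphi$ whose sublevel set simultaneously contains $\Omega$, lies inside $V$, and matches $W_\alpha$ at infinity. Richberg's smoothing of a regularized maximum is the natural tool, but the local pieces $\rho_0$ and $\rho_\alpha$ must be carefully coordinated in the transition region $|x-a|\sim\beta$ so that the regularized maximum stays strictly psh, remains negative on $\Omega$, and transitions continuously to the asymptotic cone without leaking outside $V$. I expect this coordination, together with the verification that the classical Grauert neighborhood of a non-convex open real set can be fit inside a prescribed open neighborhood $V$, to be the technical heart of the argument.
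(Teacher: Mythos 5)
The central step of your construction --- the existence of a uniform asymptotic tube $W_\alpha=\{|x|>\beta,\ x/|x|\in\Gamma_0,\ |y|^2<\alpha(|x-a|^2+1)\}$ contained in $V$ --- fails in general, and this is exactly the difficulty the theorem is about. The neighborhood $V$ of $\widehat{\Omega}$ is arbitrary, and its imaginary width around the ray $\mathbb{R}_+\omega$ may degenerate as $\omega$ approaches the boundary of $\Omega\cap S^{n-1}$. Concretely, for $n=2$, $\Omega=\{x_2>0\}$ and $V=\widehat{\ }\{x+\sqrt{-1}y\;;\;x_2>0,\ |y|<x_2/2\}$ is an open neighborhood of $\widehat{\Omega}$ in $\mathbb{D}_{\mathbb{C}^2}$, but it contains no set of the form $W_\alpha$: any open $\Gamma_0\supset\widehat{\Omega}\cap S^{1}\infty$ contains directions $(1,\delta)/|(1,\delta)|$ with $\delta>0$ arbitrarily small, and along such rays $W_\alpha$ admits $|y|\sim\sqrt{\alpha}\,|x|$ whereas $V$ only admits $|y|<\delta|x|/2$. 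Since all of your subsequent patching is anchored on $W_\alpha$, the argument does not go through; moreover, because $\Omega\setminus W_\alpha$ would then be unbounded, the ``bounded portion'' you propose to handle with the classical Grauert theorem is not in fact confined to $\{|x-a|\le 2\beta\}$.

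What is needed, and what the paper does, is a direction-dependent width. The paper sets $r(x)=\operatorname{dist}\bigl(x,(\mathbb{C}^n\setminus V)\cap(\{x\}\times\sqrt{-1}\mathbb{R}^n)\bigr)$ and conifies it by $r^*(\omega)=\inf_{t\ge1}r(t\omega)/t$, a lower semicontinuous function that is positive exactly on $\Omega\cap S^{n-1}$ and degenerates at its boundary. It then takes $U_1$ to be the interior of $\bigcap_{x^*\neq0}U\bigl(x^*,|x^*|r^*(x^*/|x^*|)/2\bigr)$, where $U(x^*,c)=\{|y|^2<|x-x^*|^2+c^2\}$; each such quadric is the sublevel set $\{\,|\exp(-(z-x^*)^2-c^2)|<1\,\}$ of the modulus of an entire function, so Steinness of the (interior of the) intersection is automatic and no plurisubharmonic patching or Richberg smoothing is needed, while the $\mathbb{R}_+$-conicity of $U_1$ for $|x|$ large yields regularity at $\infty$ directly; the finite part is handled by a second intersection $U_2$ over $|x^*|\le1$ with radii $r(x^*)/2$. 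To salvage your approach you would have to replace the constant $\alpha$ by a function of the direction comparable to $r^*$, and then establishing strict plurisubharmonicity of the resulting defining function is delicate precisely because $r^*$ is only lower semicontinuous; the intersection-of-quadrics device sidesteps this entirely.
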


\begin{proof}
Retaking $V$ smaller we can assume the following conditions.
\begin{enumerate}
\renewcommand{\labelenumi}{(\alph{enumi})}
\item $V \cap \mathbb{R}^n = \Omega$.

\item $V \cap \mathbb{C}^n 
	\subset \{z = x + \sqrt{-1}y \in \mathbb{C}^n\,;\, |y| \le\mbox{max}\{1, |x|/2\}\}$.

\item $\widehat{V \cap \mathbb{C}^n} = V$.
\end{enumerate}

We introduce some new functions. Let $r(x)$ be a function on $\mathbb{R}^n$ such that
\begin{equation}
r(x) = \operatorname{dist}(x,\, 
(\mathbb{C}^n \setminus V) \cap (\{x\} \times \sqrt{-1}\mathbb{R}^n)) \qquad
(x \in \mathbb{R}^n).
\end{equation}
We also define the function $r^*(\omega)$ on $S^{n-1}$ by
\begin{equation}
r^*(\omega) = \inf_{t \ge 1} \dfrac{r(t\omega)}{t} \qquad
(\omega \in S^{n-1}).
\end{equation}
Here $r^*$ has the following properties.
\begin{enumerate}
\renewcommand{\labelenumi}{(\roman{enumi})}
\item $r^*(\omega)$ is a lower semi-continuous function on $S^{n-1}$.
\item $r^*(\omega) >0$ for $(\omega \in \Omega \cap S^{n-1})$ and $r^*(\omega) = 0$ otherwise.

\end{enumerate}

For $x^* \in \mathbb{R}^n$ and for $c>0$, we define an analytic polyhedron $U(x^*, c)$ as follows.
\begin{equation}
U(x^*, c)=\{ z = x + \sqrt{-1}y \in \mathbb{C}^n\,;\, |y|^2 < |x - x^*|^2 + c^2 \}.
\end{equation}
Moreover we define an open set $U_1$ in $\mathbb{C}^n$ as the interior of the following set.
\begin{equation}
\bigcap_{x^* \in \mathbb{R}^n,\,x^* \ne 0}
W\left(x^*,\, \dfrac{|x^*| r^*(x^*/|x^*|)}{2}\right).
\end{equation}
Then $U_1$ has the following properties.
\begin{enumerate}
\renewcommand{\labelenumi}{(\arabic{enumi})}
\item It is a $\mathbb{R}_+$-conic set.
\item It is a Stein open set.
\item $\Omega \subset U_1$ and $U_1\cap \{z = x + \sqrt{-1}y \in \mathbb{C}^n\,;\, |x| \ge 1\}\subset V$ hold.
\end{enumerate}
These properties are shown in the following way:

We first show (1).
Let $z=x+\sqrt{-1}y\in U_1$.
For any $x^*\in\mathbb{R}^n\setminus \{0\}$, $z\in U_{x^*}:=U\left(x^*,\, \dfrac{|x^*| r^*(x^*/|x^*|)}{2}\right)$ holds.
Hence for any $t\in\mathbb{R}_+$ and for any $x^*\in\mathbb{R}^n\setminus \{0\}$ it is sufficient to prove $tz\in U_{x^*}$.
Since $z\in U_{x^*}$ holds for any $x^*\in\mathbb{R}^n\setminus \{0\}$, by the definition we get
\begin{equation}
|y|^2<|x-x^*|^2+\left(\dfrac{|x^*| r^*(x^*/|x^*|)}{2}\right)^2.
\end{equation}
By multiplying the both sides by $t^2$
\begin{equation}
|ty|^2<|tx-tx^*|^2+\left(\dfrac{|tx^*| r^*(tx^*/|tx^*|)}{2}\right)^2.
\end{equation}
Furthermore, for any $t\in\mathbb{R}_+$ and for any $x^*\in\Omega$, $tx^*\in\Omega$ holds.
Hence we have $tz\in U_1$ and we have shown (1).

Next we give the proof of (2).
Because
\begin{flalign}
&U\left(x^*,\, \dfrac{|x^*| r^*(x^*/|x^*|)}{2}\right) \nonumber \\
=&\left\{z\in\mathbb{C}^n\,;\,\left|{\rm exp}\left(-(z-x^*)^2-\left(\dfrac{|x^*| r^*(x^*/|x^*|)}{2}\right)^2\right)\right|<1\right\},
\end{flalign}
$U_1$ is a ${\rm Stein}$ open set.
The proof of (2) has been completed.

Finally we show (3).
$\Omega\subset U_1$ is obvious from the construction of $U_1$.
Moreover it is easy to see $U_1\cap \{z = x + \sqrt{-1}y \in \mathbb{C}^n\,;\, |x| \ge 1\}\subset V$ by the definition of $r^*(\omega)$.
Hence (3) follows.
Let $\widetilde{U_1}$ be the interior of
$$
\bigcap_{x^* \in \mathbb{R}^n,\,x^* \geq 1}
U\left(x^*,\, \dfrac{|x^*| r^*(x^*/|x^*|)}{2}\right).
$$
Let us define $U_2$ by the interior of
\begin{equation}
U_2=
\bigcap_{x^* \in \mathbb{R}^n,\,|x^*| \le 1}
U\left(x^*,\, r(x^*)/2\right).
\end{equation}
Since there exists $R>0$ such that
$$
\widetilde{U_1}=U_1\quad\mbox{in }\{z=x+\sqrt{-1}y\,;\,|x|>R\}
$$
we see that $\widetilde{U_1}$ is regular at $\infty$ and a Stein open set.
Then $U = \widehat{}\,{(\widetilde{U_1} \cap U_2)}$ satisfies the conditions of the theorem.
\end{proof}

Set $M = \widehat{\Omega}$. 
We define a homeomorphism $\iota: \widehat{X}\cap\mathbb{C}^n \to M \times D^n$ as follows.
\begin{equation}
	\iota(x + \sqrt{-1}y) = \left(x,\, \dfrac{y}{\sqrt{|x|^2 + 1}}\right).
\end{equation}
By extending the morphism to $\widehat{X}$ continuously, we obtain the homeomorphism between $\widehat{X}$ and
$\widehat{\Omega} \times D^n = M \times D^n$.

\

Let $\mathcal{U}$ be a family of all the open subsets $U$ in $\widehat {X}$ which satisfy
\begin{enumerate}
\item $U$ is a neighborhood of $M$.
\item $U$ is regular at $\infty$.
\item $U\cap \mathbb{C}^n$ is a Stein open set.
\end{enumerate}
Such an open subset surely exists thanks to Theorem \ref{th:laplace-grauert}.

\

Let $\mathcal{T}$ be a family of all the stratifications of $S^{n-1}$ which are associated with partitioning by a finite family of hyperplanes passing through the origin in $\mathbb{R}^n$.
It is clear that $\mathcal{T}$ satisfies the condition (T).

\

On the aforementioned preparations, we can easily prove that $(\mathcal{T},\,\mathcal{U},\,\mathcal{W})$ satisfies the conditions (T), (U) and (W) by applying Theorem \ref{th:expo-vanishing} and Theorem \ref{th:laplace-grauert}, and thus intuitive representation of Laplace hyperfunctions is justified.

\end{document}